\g@addto@macro{\endabstract}{\@setabstract}
\renewcommand{\leq}{\leqslant}
\renewcommand{\geq}{\geqslant}
\newcommand{\setntn}[2]{ \{ #1 : #2 \} }
\newcommand{\iidsim}{\stackrel {\textrm{ {\sc iid }}} {\sim} }
\newcommand*\diff{\mathop{}\!\mathrm{d}}
\renewcommand{\epsilon}{\varepsilon}
\newcommand{\gG}{\mathcal G}
\newcommand{\vV}{\mathcal V}
\newcommand{\hH}{\mathcal H}
\newcommand{\XX}{\mathsf X}
\renewcommand{\AA}{\mathsf A}
\newcommand{\FF}{\mathsf F}
\newcommand{\ZZ}{\mathsf Z}
\newcommand{\YY}{\mathsf Y}
\newcommand{\GG}{\mathbbm G}
\newcommand{\HH}{\mathbbm H}
\newcommand{\VV}{\mathbbm V}
\newcommand{\RR}{\mathbbm R}
\newcommand{\NN}{\mathbbm N}
\newcommand{\EE}{\mathbbm E \,}
\theoremstyle{plain}
\newtheorem{theorem}{Theorem}[section]
\newtheorem{lemma}[theorem]{Lemma}
\newtheorem{proposition}[theorem]{Proposition}
\theoremstyle{definition}
\newtheorem{example}{Example}[section]
\newtheorem{assumption}{Assumption}[section]
\begin{document}

\title{}

\date{\today}

\begin{center}
	\Large
	Dynamic Programming Deconstructed: Transformations of the Bellman Equation and Computational Efficiency\footnote{We 
		thank Fedor Iskhakov, Takashi Kamihigashi, Larry Liu and Daisuke Oyama
		for valuable feedback and suggestions, as well as audience members
		at the Econometric Society meeting in Auckland in 2018 and the 2nd
		Conference on Structural Dynamic Models in Copenhagen in 2018.
		Financial support from ARC Discovery Grant
		DP120100321 is gratefully acknowledged. \\ 
		\emph{Email addresses:} \texttt{qingyin.ma@cueb.edu.cn}, \texttt{john.stachurski@anu.edu.au} }

	\bigskip
	\normalsize
	Qingyin Ma\textsuperscript{a} and John Stachurski\textsuperscript{b} \par \bigskip
	
	\textsuperscript{a}ISEM, Capital University of Economics and Business    \\
	\textsuperscript{b}Research School of Economics, Australian National University \bigskip
	
	\today
\end{center}

\begin{abstract} 
    Some approaches to solving challenging dynamic programming problems, such as Q-learning, begin by transforming the Bellman equation into an alternative functional equation, in order to open up a new line of attack.  Our paper studies this idea systematically, with a focus on boosting computational efficiency.  We provide a characterization of the set of valid transformations of the Bellman equation, where validity means that the transformed Bellman equation maintains the link to optimality held by the original Bellman equation.  We then examine the solutions of the transformed Bellman equations and analyze correspondingly transformed versions of the algorithms used to solve for optimal policies. These investigations yield new approaches to a variety of discrete time dynamic programming problems, including those with features such as recursive preferences or desire for robustness.  Increased computational efficiency is demonstrated via time complexity arguments and numerical experiments.
    \vspace{1em}
    
    \noindent \textit{JEL Classifications:} C61, E00 \\
    \textit{Keywords:} Dynamic programming, optimality, computational efficiency
\end{abstract}

\section{Introduction}

Dynamic programming is central to the analysis of intertemporal planning
problems in management, operations research, economics, finance and other
related disciplines (see, e.g., \cite{bertsekas2017dynamic}). When combined
with statistical learning, dynamic programming also drives a number of
strikingly powerful algorithms in artificial intelligence and automated
decision systems (\cite{kochenderfer2015decision}).  At the heart of dynamic
programming lies the Bellman equation, a functional equation
that summarizes the trade off between current and future rewards for a
particular dynamic program. Under standard regularity conditions, solving the
Bellman equation allows the researcher to assign optimal values to states and
then compute optimal policies via Bellman's principle of optimality
(\cite{bellman1957dynamic}).

Despite its many successes, practical implementation of dynamic programming
algorithms is often challenging, due to high dimensionality, irregularities
and/or lack of data (see, e.g., \cite{rust1996numerical}).  A popular way to
mitigate these problems is to try to change the angle of attack by rearranging
the Bellman equation into an alternative functional form.  One example of this
is Q-learning, where the first step is to transform the Bellman
equation into a new functional equation, the solution of which is the
so-called Q-factor (see, e.g., \cite{kochenderfer2015decision} or
\cite{bertsekas2017dynamic}, Section~6.6.1).  Working with the Q-factor turns
out to be convenient when data on transition probabilities is scarce.

Other examples of transformations of the Bellman equation can be found in
\cite{kristensen18}, who investigate three versions of the Bellman equation
associated with a fixed dynamic programming problem, corresponding to the
value function, ``expected value function'' and ``integrated value
function'' respectively.  Similarly, \cite{bertsekas2017dynamic} discusses
modifying the Bellman equation by integrating out ``uncontrollable states''
(Section~6.1.5).   Each of these transformations of the Bellman equation
creates new methods for solving for the optimal policy, since
the transformations applied to the Bellman equation can be likewise
applied to the iterative techniques used to solve the
Bellman equation (e.g., value function iteration or policy iteration).

The purpose of this paper is twofold.  First, we provide the first systematic
investigation of these transformations, by developing a framework sufficiently
general that all transformations of which we are aware can be expressed as
special cases.  We then examine the solutions of the transformed Bellman
equations, along with correspondingly transformed versions of solution
algorithms such as value function iteration or policy iteration.  We provide a
condition under which the transformed Bellman equations satisfy a version of
Bellman's principle of optimality, and the correspondingly transformed
versions of the algorithms lead to optimal policies.  This puts existing
transformations on a firm theoretical foundation, in terms of their link to
optimal policies, thereby eliminating the need to check optimality or
convergence of algorithms on a case-by-case basis.

Second, on a more practical level, we use the framework developed above to
create new transformations, or to apply existing transformations in new
contexts, and to examine the convergence properties of the associated
algorithms.  Although there are many motivations for applying transformations
to the Bellman equation (such as to facilitate learning, as in the case of
Q-learning, or to simplify estimation as in \cite{rust1987optimal}), the
motivation that we focus on here is computational efficiency.  In particular,
we examine transformations of the Bellman equation that retain the links to
optimality discussed above while reducing the dimension of the state space.
Because the cost of high dimensionality is exponential, even small reductions
in the number of variables can deliver speed gains of one or two orders of
magnitude.  These claims are verified theoretically and through numerical
experiments when we consider applications.

One of the reasons that we are able to create new transformations is that we
embed our theoretical results in a very general abstract dynamic programming
framework, as found for example in \cite{bertsekas2013abstract}.  This allows
us to examine transformations of the Bellman equation in settings beyond the
traditional additively separable case, which have been developed to better
isolate risk preferences or accommodate notions of ambiguity. Examples of
such objective functions can be found in \cite{iyengar2005robust},
\cite{hansen2008robustness}, \cite{ruszczynski2010risk} and
\cite{bauerle2018stochastic}.

When analyzing solution methods and computational efficiency, we show that
successive iterates of the transformed Bellman operator converge at the same
rate as the original Bellman operator in the following sense: the $n$-th
iterate of the Bellman operator can alternatively be produced by iterating
the same number of times with the transformed Bellman operator 
and vice versa.  This means that, to judge the relative
efficiency, one only needs to consider the computational cost of a single
iteration of the transformed Bellman operator versus the original Bellman
operator.  

When treating algorithms for computing optimal policies, we focus in
particular on so-called optimistic policy iteration, which contains policy
iteration and value function iteration as special cases, and which can
typically be tuned to converge faster than either one in specific
applications.  We show that, under a combination of stability and
monotonicity conditions, the sequence of policies generated by
a transformed version of optimistic policy iteration, associated with a
particular transformation of the Bellman equation, converges to optimality.

Some results related to ours and not previously mentioned can be found in
\cite{rust1994structural}, which discusses the connection between the fixed
point of a transformed Bellman equation and optimality of the policy that
results from choosing the maximal action at each state evaluated according to
this fixed point.  This is again a special case of what we cover, specific to
one specialized class of dynamic programming problems, with discrete choices
and additively separable preferences, and refers to one specific
transformation associated with the expected value function.  

There is, of course, a very large literature on maximizing computational
efficiency in solution methods for dynamic programming problems (see, e.g.,
\cite{powell2007approximate}).  The results presented here are, in many ways, complementary to
this existing literature.  For example, fitted value iteration is a popular
technique for solving the Bellman equation via successive approximations,
combined with a function approximation step (see, e.g.,
\cite{munos2008finite}).  This methodology could also, in principle, be
applied to transformed versions of the Bellman equation and the successive
approximation techniques for solving them examined in this paper.

The rest of our paper is structured as follows.  Section~\ref{s:gf} formulates
the problem.  Section~\ref{s:opt} discusses optimality and algorithms.
Section~\ref{s:aii} gives a range of applications. Longer proofs are deferred to
the appendix.

\section{General Formulation}

\label{s:gf}

\label{s:cv_approach}

This section presents an abstract dynamic programming problem and the key concepts and operators related to the formulation. 

\subsection{Problem Statement}

We use $\NN$ to denote the set of natural numbers and 
$\NN_0 := \{ 0\} \cup \NN$, while $\RR^E$ is the set of real-valued functions 
defined on some set $E$. In what follows, a dynamic programming problem consists of
\begin{itemize}
	\item a nonempty set $\XX$ called the \textit{state space},
	\item a nonempty set $\AA$ called the \textit{action space},
	\item a nonempty correspondence $\Gamma$ from $\XX$ to $\AA$ called 
	the \textit{feasible correspondence}, along with the associated set of
	\emph{state-action pairs}
	\begin{equation*}
	\FF := \setntn{(x, a) \in \XX \times \AA}{a \in \Gamma(x)},
	\end{equation*}
	\item a subset $\vV$ of $\RR^{\XX}$ called the set of \textit{candidate value 
		functions}, and
	\item a \textit{state-action aggregator} $H$ mapping $\FF \times \vV$ to
	$\RR \cup \{ -\infty \}$.
\end{itemize}

The interpretation of $H( x, a, v)$ is the
lifetime value associated with choosing action $a$ at current state $x$ and
then continuing with a reward function $v$ attributing value to states.  In
other words, $H(x, a, v)$ is an abstract representation of the value to be
maximized on the right hand side of the Bellman equation (see \eqref{eq:be} below). This abstract
representation accommodates both additively separable and nonseparable preferences 
and is based on \cite{bertsekas2013abstract}. 
The sets $\XX$ and $\AA$ are, at this point, arbitrary. They can, for example, be finite or
subsets of Euclidean vector space. 

We associate to this abstract dynamic program the Bellman equation
\begin{equation}
\label{eq:be}
v(x) 
= \sup_{a \in \Gamma (x)} H(x, a, v)
\quad \text{for all } \, x \in \XX.
\end{equation}
Stating that $v \in \vV$ solves the Bellman equation is equivalent to stating
that $v$ is a fixed point of the Bellman operator, which we denote by $T$ and
define by 
\begin{equation}
\label{eq:bop}
T \, v (x) = \sup_{a \in \Gamma (x)} H(x, a, v)
\qquad \; (x \in \XX, \; v \in \vV).
\end{equation}

\begin{example}
	\label{eg:fmdp}
	In a traditional infinite horizon finite state Markov decision process, an
	agent observes a state $x$ in finite set $\XX$ and responds with action
	$a$ from $\Gamma(x)$, a subset of finite set $\AA$.  The state then
	updates to $x'$ next period with probability $p(x, a, x')$. The objective
	is to choose a policy $\sigma \colon \XX \to \AA$ such that, when
	$\{a_t\}$ is selected according to $a_t = \sigma(x_t)$ at each $t$, the
	objective $\EE \sum_{t \geq 0} \beta^t r(x_t, a_t)$
	is maximized.  Here $\beta \in (0, 1)$ is a discount factor and $r$ is
	a real-valued reward
	function.  This fits directly into our framework if we set $\vV$ to be
	all functions from $\XX$ to $\RR$ and 
	\begin{equation}
	\label{eq:h}
	H(x, a, v) = r(x, a) + \beta \sum_{x' \in \XX} v(x') p(x, a, x').
	\end{equation}
	In particular, inserting the right hand side of \eqref{eq:h} into \eqref{eq:be}
	reproduces the standard Bellman equation for this problem (see, e.g., \cite{bertsekas2013abstract}, Example~1.2.2).
\end{example}

More detailed examples, involving those with nonseparable preferences, are given in Section \ref{ss:ex} and Section \ref{s:aii} below.

\subsection{Policies and Assumptions}

\label{ss:paa}

Let $\Sigma$ denote the set of \textit{feasible policies}, which we define as all
$\sigma \colon \XX \to \AA$ satisfying $\sigma(x) \in \Gamma (x)$ for all $x \in \XX$ and 
the regularity condition
\begin{equation}
\label{eq:f}
v \in \vV
\text{ and } 
w(x) = H (x, \sigma(x), v) \text{ on } \XX
\implies
w \in \vV.
\end{equation}
Given $v \in \vV$, a feasible policy $\sigma$ with the property that
\begin{equation} 
\label{eq:exgree}
H (x, \sigma(x), v )
= \sup_{a \in \Gamma(x)} H( x, a , v )  
\quad \text{for all $x \in \XX$}
\end{equation}
is called \emph{$v$-greedy}.
In other words, a $v$-greedy policy is one we obtain by treating $v$ as the
value function and maximizing the right hand side of the Bellman equation.


\begin{assumption}
	\label{a:maxex}
	There exists a subset $\VV$ of $\vV$ such that $T$ maps elements of $\VV$ into itself, and a $v$-greedy policy exists in $\Sigma$ for each $v$ in $\VV$.
\end{assumption}

Assumption~\ref{a:maxex} guarantees the existence of stationary policies and
allows us to work with maximal decision rules instead of suprema. It
is obviously satisfied for any dynamic program where the set of actions is
finite, such as Example~\ref{eg:fmdp} (by letting $\VV = \vV$), and extends to infinite choice problems when primitives are
sufficiently continuous and the choice set at each action is compact.

Given $\sigma \in \Sigma$, any function $v_\sigma$ in $\vV$ satisfying
\begin{equation*}
v_\sigma (x) = H (x, \sigma(x), v_\sigma )
\quad \text{for all } \, x \in \XX
\end{equation*}
is called a \textit{$\sigma$-value function}. We understand $v_\sigma (x)$ as
the lifetime value of following policy $\sigma$ now and forever, starting from
current state $x$.  

\begin{assumption}
	\label{a:ess}
	For each $\sigma \in \Sigma$, there is exactly one $\sigma$-value function 
	in $\vV$, denoted by $v_{\sigma}$.   
\end{assumption}

Assumption~\ref{a:ess} is required for our optimization problem to be well defined, and is
easy to verify for regular Markov decision problems such as the one described
in Example~\ref{eg:fmdp} (see, e.g., \cite{bertsekas2013abstract},
Example~1.2.1).  When rewards are unbounded or
dynamic preferences are specified recursively, the restrictions on primitives
used to establish this assumption vary substantially across applications (see,
e.g., \cite{marinacci2010unique}, \cite{bauerle2018stochastic}, or
\cite{bloise2018convex} for sufficient conditions in these contexts, and
\cite{bertsekas2013abstract} for general discussion).

\subsection{Decompositions and Plan Factorizations}

Next we introduce plan factorizations (which correspond to transformations of
the Bellman equation) in a relatively abstract form,
motivated by the desire to accommodate existing transformations observed in the literature and admit new ones.
As a start, let $\gG$ (resp., $\GG$) be the set of functions $g$ in $\RR^\FF$ such that $g = W_0 v$ for some $v \in \vV$ (resp., for some $v \in \VV$).
Let $\hH$ (resp., $\HH$) be all functions $h$ in $\RR^\FF$ such that, for
some $v \in \vV$ (resp., for some $v \in \VV$), we have $h(x, a) = H(x, a, v)$ for all $(x, a) \in \FF$. 
Let $M$ be the operator defined at $h \in \hH$ by
\begin{equation}
\label{eq:defm}
(M h) (x) = \sup_{a \in \Gamma(x)} h(x, a).
\end{equation}
The operator $M$ maps elements of $\HH$ into $\VV$. Indeed, 
given $h \in \HH$, there exists by definition a $v \in \VV$ such that $h(x, a) =
H(x, a, v)$ at each $(x, a) \in \FF$. We then have
$Mh = Tv \in \VV$ by \eqref{eq:bop} and Assumption~\ref{a:maxex}. 

A \emph{plan factorization} associated with the dynamic program
described above is a pair of operators $(W_0, W_1)$ such that
\begin{enumerate}
	\item $W_0$ is defined on $\vV$ and takes values in $\gG$, 
	\item $W_1$ is defined on $\gG$ and takes values in $\hH$, and
	\item the operator composition $W_1 W_0$ satisfies
	\begin{equation}
	\label{eq:qfac}
	(W_1 \, W_0 \, v)(x, a) = H(x, a, v)
	\quad \text{for all } (x,a) \in \FF, \; v \in \vV.
	\end{equation}
\end{enumerate}

Equation \eqref{eq:qfac} states that $W_0$ and $W_1$ provide a
decomposition (or factorization) of $H$, so that each element $W_i$ implements a 
separate component of the two stage (i.e., present and future)  planning problem 
associated with the Bellman equation.  

Given the definition of $M$ in \eqref{eq:defm} and the factorization
requirement \eqref{eq:qfac}, the Bellman operator $T$ from \eqref{eq:bop} can
now be expressed as
\begin{equation}
\label{eq:dt}
T \, = \, M \; W_1 \; W_0.
\end{equation}
In the following, when we discuss the Bellman operator (and the refactored Bellman operator to be defined below), we confine the domain of the operators $W_0$, $W_1$ and $M$ to $\VV$, $\GG$ and $\HH$, respectively. 
In that circumstance, $T$ is a cycle starting from $\VV$. In particular, $W_0$ maps $\VV$ into $\GG$, and $W_1$ maps $\GG$ into $\HH$ by definition, while $M$ maps $\HH$ into $\VV$ as has been shown above. A visualization is given in Figure~\ref{f:triangle}.

\begin{figure}
	\begin{center}
		\includegraphics[scale=0.75]{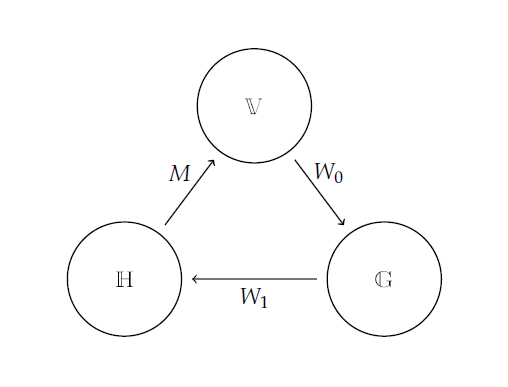}
		\caption{\label{f:triangle} The one-shift operators}
	\end{center}
\end{figure}


Corresponding to this same plan factorization $(W_0, W_1)$, we introduce a
\emph{refactored Bellman operator} $S$ on $\GG$ defined by
\begin{equation}
\label{eq:dj}
S \, = \,  W_0 \; M \; W_1.
\end{equation}
In terms of Figure~\ref{f:triangle}, $S$ is a cycle starting from $\GG$. 
Corresponding to $S$, we have the \emph{refactored Bellman equation} 
\begin{equation}
\label{eq:rbe}
g(x,a) \,=\, (W_0 \, M \, W_1 \, g) (x,a)
\quad \text{for all } \, (x,a) \in \FF. 
\end{equation}
The transformations of the Bellman equation we wish to consider all correspond
to a version of \eqref{eq:rbe}, under some suitable specification of $W_0$ and
$W_1$.

\subsection{Examples}
\label{ss:ex}

In this section we briefly illustrate the framework with examples.  The
examples contain both common transformations and new ones. 
Throughout, $\beta \in (0,1)$ denotes the discount factor.

\subsubsection{Q-Factors}

\label{sss:ql}

Consider the traditional finite state Markov decision process from Example~\ref{eg:fmdp},
where $\vV$ is all functions from $\XX$ to $\RR$ and $H$ is as given in
\eqref{eq:h}. Let $\VV = \vV$. One special case of the refactored Bellman equation 
is the equation for determining optimal Q-factors that forms
the basis of Q-learning.  If $I$ is the identity map and we set
\begin{equation*}
(W_0 \, v)(x, a) = r(x, a) + \beta \sum_{x' \in \XX} v(x') p(x, a, x')
\quad \text{and} \quad
W_1 = I,
\end{equation*}
then $(W_0, W_1)$ is a plan factorization, since $(W_1 \, W_0 \, v) (x, a)
= H(x, a, v)$ when the latter is given by \eqref{eq:h}.  For this plan
factorization, the refactored Bellman operator $S$ from \eqref{eq:dj} 
becomes $S \,=\, W_0 \, M$, or, more explicitly,
\begin{equation*}
(S g)(x, a) = 
r(x, a) + \beta \sum_{x' \in \XX} \max_{a' \in \Gamma(x')} g(x', a') p(x, a, x').
\end{equation*}
Solving for $g = S g$ is exactly the problem associated with computation of
optimal Q-factors. See, for example, Equation (1.4) of \cite{bertsekas2012q}
or Equation (2.24) of \cite{bertsekas2017dynamic}. Thus, the underlying
functional equations and fixed point problems of Q-learning correspond to the
special case of our theory where $W_1$ is the identity.

\subsubsection{The Standard Case}

\label{sss:sc}

The previous example showed how the problem of obtaining Q-factors is one kind
of plan factorization---in fact an extreme kind, where the element $W_1$ of the
pair $(W_0, W_1)$ is the identity.  The other extreme case is when $W_0$ is
the identity.  Then, the standard and refactored operators $T$ and $S$
coincide, as can be confirmed by consulting \eqref{eq:dt} and
\eqref{eq:dj}.  Thus, the standard Bellman equation and the Q-factor equation
can be viewed as the two extremal cases of plan factorization.

\subsubsection{Expected Value Transformation}

\label{sss:evt}

Continuing with the Markov decision process from Example~\ref{eg:fmdp},
where $H$ is as given in \eqref{eq:h}, consider the plan factorization $(W_0,
W_1)$ defined by
\begin{equation}
\label{eq:evpf}
(W_0 \, v) (x, a) = \sum_{x' \in \XX} v(x') p(x, a, x')
\quad \text{and} \quad
(W_1\,  g) (x, a) = r(x, a) + \beta g(x, a).
\end{equation}
Evidently \eqref{eq:evpf} yields a plan
factorization, in the sense that $(W_1 \, W_0 \, v)(x, a)
= H(x, a, v)$ when the latter is given by \eqref{eq:h}.  For this plan
factorization, the refactored Bellman operator $S$ from \eqref{eq:dj} 
becomes 
\begin{equation}
\label{eq:evpfs}
(S g)(x, a) = 
(W_0 \, M \, W_1 \, g) (x, a) = 
\sum_{x' \in \XX}
\max_{a' \in \Gamma(x')}
\left\{ r(x', a') + \beta   g(x', a') \right\} p(x, a, x').
\end{equation}
With this choice of plan factorization, 
the operator $S$ is a generalization of the ``expected value operator'' used by
\cite{rust1987optimal} in the context of optimal timing of decisions for bus
engine replacement. At this point in time it is not apparent why one should
favor the use of $S$ over the regular Bellman operator $T$, and indeed there
might be no advantage.  The benefit of having a general optimality
theory based around $S$ rather than $T$, as constructed below, is the option
of using $S$ when the structure of the problem implies that doing so \emph{is}
advantageous.  Examples are shown in Section~\ref{s:aii}.

\subsubsection{Optimal Stopping}

\label{sss:os}

Consider an optimal stopping problem with additively separable preferences (see, e.g., \cite{monahan1980optimal} or
\cite{peskir2006optimal}), where an agent observes current state $x$ and
chooses between stopping and continuing. Stopping generates terminal reward
$r(x)$ while continuing yields flow continuation reward $c(x)$. If the agent
continues, the next period state $x'$ is drawn from $P(x, \diff x')$ and the process repeats. 
The agent takes actions in order to maximize expected lifetime rewards.
The state-action aggregator $H$ for this model is
\begin{equation}
\label{eq:hos}
H(x, a, v) = a \, r(x) + (1 - a) \left[ c(x) + \beta \int v(x') P(x, \diff x') \right],
\end{equation}
where $a \in \Gamma(x) := \{0, 1\}$ is a binary choice variable with $a=1$
indicating the decision to stop. Let $\XX$ be a Borel subset of $\RR^m$, $r$
and $c$ be bounded and continuous, $\vV = \VV$ be the set of bounded continuous
functions on $\XX$, and $P$ satisfy the Feller property (i.e., $x \mapsto \int
v(x') P(x, \diff x')$ is bounded and continuous whenever $v$ is). 
Assumption~\ref{a:maxex} holds because the action space is discrete and 
Assumption~\ref{a:ess} holds by a 
standard contraction argument (see, e.g., Section~2.1 of \cite{bertsekas2013abstract}). 

One possible plan factorization is 
\begin{equation*}
\label{eq:fac_os}
(W_0 \, v) (x) = \int v(x') P(x, \diff x')
\quad \text{and} \quad
(W_1\,  g) (x) = 
a \, r(x) + (1 - a) \left[ c(x) + \beta g(x) \right].
\end{equation*}
The refactored Bellman operator $S \,=\, W_0 \, M \, W_1$ is then given by
\begin{equation}
\label{eq:rbo2_os}
(Sg)(x) = \int \max 
\left\{ r(x'), \, c(x') + \beta g(x') \right\} 
P(x, \diff x').
\end{equation}
There exist applications in the field of optimal stopping where iteration with
the operator $S$ given in \eqref{eq:rbo2_os} is computationally more efficient
than iterating with the standard Bellman operator $T$ (see, e.g., \cite{rust1987optimal}). 
Section~\ref{ss:os} expands on this point.

\subsubsection{Risk Sensitive Preferences}

\label{sss:rsk}

Consider a risk-sensitive control problem  where $H$ can be expressed as
\begin{equation}
\label{eq:rsk}
H(x, a, v) =
\left\{
r(x, a) - \frac{\beta}{\gamma} \log 
\int \exp[ - \gamma v(x') ] P(x, a, \diff x') 
\right\}.
\end{equation}
Here $P(x, a, \diff x')$ gives one-step state transition
probabilities given state $x \in \RR_+$ and action $a \in [0, x]$,
while $\gamma > 0$ controls the degree of risk sensitivity.  One example of this set up occurs in
\cite{bauerle2018stochastic}, where $H(x, a, v)$ has the interpretation of
lifetime rewards when an agent owns current capital $x$, choses current
consumption $a$, receives current reward $r(x, a)$ and then transitions to the
next period with capital drawn from $P(x, a, \diff x')$. Sufficient conditions for 
Assumptions~\ref{a:maxex}--\ref{a:ess} can be found in Theorem~1 of that paper. 

There are many plan factorizations we could consider here, one of which 
extends the Q-factor transformation in Section~\ref{sss:ql}, and
another of which extends the expected value transformation in
Section~\ref{sss:evt}.  Taking the latter as an example, 
we set 
\begin{equation*}
(W_0 \, v) (x, a) = 
- \frac{\beta}{\gamma} \log
\int \exp[ - \gamma v(x') ] P(x, a, \diff x')
\end{equation*}
\begin{equation}
\label{eq:evrsk}
\text{and} \quad
(W_1\,  g) (x, a) = r(x, a) + g(x, a).
\end{equation}
Evidently \eqref{eq:evrsk} yields a plan
factorization, in the sense that $(W_1 \, W_0 \, v)(x, a)
= H(x, a, v)$ when the latter is given by \eqref{eq:rsk}.
The refactored Bellman operator $S \,=\, W_0 \, M \, W_1$ is given by
\begin{equation}
\label{eq:rfrsk}
(S g)(x, a) = 
- \frac{\beta}{\gamma} \log
\int \exp \left[ 
- \gamma 
\max_{a' \in \Gamma(x')} \{ r(x', a') + g(x', a') \}
\right]
P(x, a, \diff x').
\end{equation}

\subsection{Refactored Policy Values}

Returning to the general setting, we also wish to consider the value of
individual policies under the transformation associated with a given plan
factorization $(W_0, W_1)$.  This will be important when we consider topics
such as policy function iteration.  To this end, for each $\sigma \in \Sigma$ and
each $h \in \hH$, we define the operator $M_\sigma \colon \hH \to \vV$ by
\begin{equation}
\label{eq:defms}
M_\sigma h (x) := h (x, \sigma(x)).
\end{equation}
That $M_\sigma$ does in fact map $\hH$ into $\vV$ follows from the definition
of these spaces and \eqref{eq:f}. In particular, if $h \in \hH$, by
definition, there exists a $v \in \vV$ such that $h(x, a) = H(x, a, v)$ for
all $(x, a) \in \FF$. Then $M_\sigma h \in \vV$ follows directly from
\eqref{eq:f}. Comparing with $M$ defined in \eqref{eq:defm}, we have
\begin{equation}
\label{eq:mvsms}
M_\sigma h \leq M h
\quad \text{for each } h \in \hH \text{ and } 
\sigma \in \Sigma.
\end{equation}
%
%
Given $\sigma \in \Sigma$, the operator $T_\sigma$ from $\vV$ to itself defined by
$T_\sigma v(x) = H(x, \sigma(x), v)$ for all $x \in \XX$ will be called the
\emph{$\sigma$-value operator}. By construction, it has the property that $v_\sigma$
is the $\sigma$-value function corresponding to $\sigma$ if and only if it is
a fixed point of $T_\sigma$. With the notation introduced above, we can
express it as
\begin{equation}
\label{eq:sto}
T_\sigma \, = \, M_\sigma \; W_1 \; W_0.
\end{equation}
Note that $T_\sigma$ is a cycle starting from $\vV$. In particular, $W_0$ maps $\vV$ into $\gG$, and $W_1$ maps $\gG$ into $\hH$ by the definition of plan factorization, while $M_\sigma$ maps $\hH$ into $\vV$ as shown above. 

Analogous to the definition of the refactored Bellman operator in \eqref{eq:dj},
we introduce the \emph{refactored $\sigma$-value operator} on $\gG$
\begin{equation}
\label{eq:sso}
S_\sigma \, := \, W_0 \; M_\sigma \; W_1
\end{equation}
corresponding to a given plan factorization $(W_0, W_1)$. Similarly, $S_\sigma$ is a cycle starting from $\gG$.
A fixed point $g_\sigma$ of $S_\sigma$ is called a \emph{refactored
	$\sigma$-value function}.  The value $g_\sigma(x, a)$ can be interpreted as
the value of following policy $\sigma$ in all subsequent periods, conditional
on current action $a$.

\begin{example}
	\label{eg:ssig}
	Recall the expected value function factorization of a finite state
	Markov decision process discussed in Section~\ref{sss:evt}.
	For a given policy $\sigma$, the refactored $\sigma$-value operator
	can be obtained by replacing $M$ in \eqref{eq:evpfs} with $M_\sigma$,
	which yields
	\begin{align}
	\label{eq:ssigfmdp}
	(S_\sigma g)(x, a) 
	&= (W_0 \, M_\sigma \, W_1 \, g) (x, a) \nonumber \\
	&= \sum_{x' \in \XX}
	\left\{ r(x', \sigma(x')) + \beta   g(x', \sigma(x')) \right\} p(x, a, x').
	\end{align}
\end{example}

\begin{example}
	\label{eg:osrf}
	Recall the plan factorization \eqref{eq:rbo2_os} applied to the optimal
	stopping problem with aggregator $H$ given by \eqref{eq:hos}.
	A feasible policy is a map $\sigma$ from $\XX$ to $\AA = \{0, 1\}$.
	The refactored $\sigma$-value operator corresponding to this plan
	factorization is 
	\begin{equation}
	\label{eq:osrfo}
	(S_\sigma g)(x) = \int 
	\left\{ \sigma(x') r(x') + (1 - \sigma(x'))[ c(x') + \beta g(x')] \right\} 
	P(x, \diff x').
	\end{equation}
\end{example}

\subsection{Fixed Points and Iteration}

\label{ss:fps}

We begin with some preliminary results on properties of the operators defined
above.
Our first result states that, to iterate with $T$, one can
alternatively iterate with $S$, since iterates of $S$ can be converted
directly into iterates of $T$.  Moreover, the converse is also true, and
similar statements hold for the policy operators:

\begin{proposition}
	\label{p:iter}
	For every $n \in \NN$, we have
	\begin{equation*}
	S^n \, = \, W_0 \; T^{n-1} \; M \; W_1
	\quad \text{and} \quad
	T^n \, = \, M \; W_1 \; S^{n-1} \; W_0.
	\end{equation*}
	Moreover, for every $n \in \NN$ and every $\sigma \in \Sigma$, we have
	\begin{equation*}
	S_\sigma^n \, = \, W_0 \; T_\sigma^{n-1} \; M_{\sigma} \; W_1
	\quad \text{and} \quad
	T_\sigma^n \, = \, M_{\sigma} \; W_1 \; S_\sigma^{n-1} \; W_0.
	\end{equation*}
\end{proposition}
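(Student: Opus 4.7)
My plan is to proceed by induction on $n$, exploiting the decompositions $T = M\,W_1\,W_0$ from \eqref{eq:dt} and $S = W_0\,M\,W_1$ from \eqref{eq:dj}, together with associativity of operator composition. The driving observation is that when one writes $S^n = (W_0\,M\,W_1)^n$ and associates differently, the inner blocks $M\,W_1\,W_0$ assemble into copies of $T$, leaving one $W_0$ on the left and one $M\,W_1$ on the right; symmetrically, inside $T^n = (M\,W_1\,W_0)^n$ the inner blocks $W_0\,M\,W_1$ assemble into copies of $S$, leaving one $M\,W_1$ on the left and one $W_0$ on the right.

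Concretely, I would first dispose of the base case $n = 1$, which reduces to \eqref{eq:dj} once $T^0$ is interpreted as the identity on $\VV$. For the inductive step I would write $S^{n+1} = S \cdot S^n$, apply the inductive hypothesis $S^n = W_0\,T^{n-1}\,M\,W_1$, and then use $M\,W_1\,W_0 = T$ exactly once to absorb an additional factor of $T$ into the middle, yielding $S^{n+1} = W_0\,T^n\,M\,W_1$. The identity $T^n = M\,W_1\,S^{n-1}\,W_0$ then follows by the symmetric manipulation, or equivalently by a parallel induction that uses $W_0\,M\,W_1 = S$ in place of $M\,W_1\,W_0 = T$.

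The $\sigma$-versions are handled by running the same argument verbatim, replacing $M$ by $M_\sigma$ everywhere and invoking the decompositions $T_\sigma = M_\sigma\,W_1\,W_0$ from \eqref{eq:sto} and $S_\sigma = W_0\,M_\sigma\,W_1$ from \eqref{eq:sso}. I expect no serious obstacle: the only point needing a moment's care is checking that each intermediate composition in the induction is well-defined on the intended domain. This is immediate from the cycle structure of Figure~\ref{f:triangle}, which the excerpt has already spelled out ($W_0\colon \VV \to \GG$, $W_1\colon \GG \to \HH$, and $M, M_\sigma\colon \HH \to \VV$), so every composition appearing during the induction automatically lands in the correct space, and the argument reduces to bookkeeping on the exponents.
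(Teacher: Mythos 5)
Your proposal is correct and takes essentially the same route as the paper: the paper also proves the result by induction on $n$ using the reassociation of the cyclic decompositions $T = M\,W_1\,W_0$ and $S = W_0\,M\,W_1$ (and $T_\sigma$, $S_\sigma$ with $M_\sigma$), merely packaging this as an abstract lemma about three maps $\tau_0,\tau_1,\tau_2$ with the intertwining identity $F_{i+1}^n\,\tau_i = \tau_i\,F_i^n$. The only cosmetic difference is that the paper's lemma covers the standard and $\sigma$-versions in one stroke by choosing $(E_0,E_1,E_2,\tau_2)$ to be $(\VV,\GG,\HH,M)$ or $(\vV,\gG,\hH,M_\sigma)$, whereas you run the two inductions in parallel.
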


Proposition~\ref{p:iter} follows from the definitions of $T$ and $S$ along with a simple induction argument.  While the proof is relatively straightforward, the result is important because that it provides a
metric-free statement of the fact that the sequence of iterates of any refactored Bellman operator converges at the same rate as that of the standard Bellman operator.

The next result shows a fundamental connection between the two forms of the Bellman operator in terms of their fixed points.

\begin{proposition}
	\label{p:exu}
	The Bellman operator $T$ admits a unique fixed point
	$\bar v$ in $\VV$ if and only if the refactored Bellman operator $S$ admits a unique fixed
	point $\bar g$ in $\GG$.  Whenever these fixed points exist, they are related
	by $\bar v = M \, W_1 \, \bar g$ and $\bar g = W_0 \, \bar v$.
\end{proposition}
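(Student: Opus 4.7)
The plan is to leverage the factorizations $T = MW_1W_0$ and $S = W_0MW_1$ from \eqref{eq:dt} and \eqref{eq:dj}, together with the type information that $W_0 \colon \VV \to \GG$, $W_1 \colon \GG \to \HH$, and $M \colon \HH \to \VV$. The proof then reduces to a pair of symmetric arguments linking the two fixed-point problems, with the constructed correspondence between the fixed points making the final identities $\bar v = M W_1 \bar g$ and $\bar g = W_0 \bar v$ automatic.

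For the forward direction, I would assume $T$ has a unique fixed point $\bar v \in \VV$ and take $\bar g := W_0 \bar v$, which lies in $\GG$ by construction. Existence of $\bar g$ as a fixed point of $S$ follows by regrouping the operators: $S\bar g = W_0 M W_1 W_0 \bar v = W_0 T \bar v = W_0 \bar v = \bar g$. For uniqueness, given any $g^* \in \GG$ with $Sg^* = g^*$, I would set $v^* := MW_1 g^*$, note that $v^* \in \VV$ by the mapping properties above, and compute $Tv^* = MW_1W_0 MW_1 g^* = MW_1 Sg^* = MW_1 g^* = v^*$. Uniqueness of $\bar v$ then forces $v^* = \bar v$, and one more application of the fixed-point equation for $g^*$ yields $g^* = Sg^* = W_0 MW_1 g^* = W_0 v^* = \bar g$.

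The reverse direction follows the same template with the roles of $T$ and $S$ (and of $\bar v$ and $\bar g$) interchanged: starting from a unique fixed point $\bar g \in \GG$ of $S$, take $\bar v := MW_1 \bar g \in \VV$, and repeat the analogous calculations. Both identity relations $\bar v = MW_1 \bar g$ and $\bar g = W_0 \bar v$ are built into the constructions and require no separate verification.

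The main subtlety — rather than a genuine obstacle — is the bookkeeping of domains: one must be sure that the candidate fixed points $\bar g$ and $\bar v$ really do lie in $\GG$ and $\VV$ respectively, so that $S$ and $T$ are applied inside their declared domains, and that uniqueness is being asserted within the correct ambient space. This is handled entirely by the mapping properties of $W_0$, $W_1$, and $M$ recorded in the paragraph following \eqref{eq:defm}. Once those containments are secured, what remains is a short chain of equalities driven by the factorizations $T = MW_1W_0$ and $S = W_0MW_1$.
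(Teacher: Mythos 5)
Your proposal is correct and is essentially the paper's argument: the paper derives Proposition~\ref{p:exu} from the abstract cycle Lemmas~\ref{l:mof}--\ref{l:exf} (with $\tau_0 = W_0$, $\tau_1 = W_1$, $\tau_2 = M$, so $F_0 = T$ and $F_1 = S$), and those lemmas encode exactly your regrouping computations, including transporting an arbitrary fixed point of $S$ back to a fixed point of $T$ via $M W_1$ and invoking uniqueness there. Your write-up simply instantiates the same cycle-chasing directly for $T$ and $S$, with the domain bookkeeping handled as in the paper.
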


Thus, if a unique fixed point of the Bellman operator is desired but $T$ is
difficult to work with, a viable option is to show that $S$ has a unique fixed point,
compute it, and then recover the unique fixed point of $T$ via $\bar v = M \,
W_1 \, \bar g$. 

A result analogous to Proposition~\ref{p:exu} holds for the policy operators:

\begin{proposition}
	\label{p:vsigi}
	Given $\sigma \in \Sigma$, the refactored $\sigma$-value operator
	$S_\sigma$ has a unique fixed point $g_\sigma$ in $\gG$ if and only if
	$T_\sigma$ has a unique fixed point $v_\sigma$ in $\vV$.  Whenever these
	fixed points exist, they are related by $v_\sigma = M_\sigma \, W_1 \,
	g_\sigma$ and $g_\sigma = W_0 \, v_\sigma$.
\end{proposition}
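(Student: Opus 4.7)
The plan is to mirror the proof of Proposition~\ref{p:exu}, since $T_\sigma$ and $S_\sigma$ admit exactly the same factorized form as $T$ and $S$, namely $T_\sigma = M_\sigma W_1 W_0$ and $S_\sigma = W_0 M_\sigma W_1$, with only $M$ replaced by $M_\sigma$. The first step is the domain/codomain bookkeeping that ensures the candidate maps between the fixed point sets land in the right spaces: $W_0$ sends $\vV$ into $\gG$ by definition, $W_1$ sends $\gG$ into $\hH$ by the plan factorization requirement, and $M_\sigma$ sends $\hH$ into $\vV$ as was established just after \eqref{eq:defms} using the regularity condition \eqref{eq:f}. Hence $v \mapsto W_0 v$ from $\vV$ to $\gG$ and $g \mapsto M_\sigma W_1 g$ from $\gG$ to $\vV$ are the natural candidates for the bijection.

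Next I would establish the existence correspondence. If $v_\sigma \in \vV$ satisfies $v_\sigma = M_\sigma W_1 W_0 v_\sigma$, then applying $W_0$ to both sides gives $W_0 v_\sigma = W_0 M_\sigma W_1 W_0 v_\sigma = S_\sigma(W_0 v_\sigma)$, so $W_0 v_\sigma$ is a fixed point of $S_\sigma$ in $\gG$. Conversely, if $g_\sigma \in \gG$ satisfies $g_\sigma = W_0 M_\sigma W_1 g_\sigma$, then applying $M_\sigma W_1$ yields $M_\sigma W_1 g_\sigma = T_\sigma(M_\sigma W_1 g_\sigma)$, placing $M_\sigma W_1 g_\sigma$ among the fixed points of $T_\sigma$ in $\vV$. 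The stated relations $v_\sigma = M_\sigma W_1 g_\sigma$ and $g_\sigma = W_0 v_\sigma$ then fall out, since from $g_\sigma = S_\sigma g_\sigma = W_0(M_\sigma W_1 g_\sigma)$ one can take $v_\sigma := M_\sigma W_1 g_\sigma$.

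For the uniqueness equivalence, the two computations above show that the maps $v \mapsto W_0 v$ and $g \mapsto M_\sigma W_1 g$ act as mutual inverses on the respective fixed point sets, giving a bijection between the fixed points of $T_\sigma$ in $\vV$ and those of $S_\sigma$ in $\gG$. Unique solvability therefore transfers in either direction: for example, if $T_\sigma$ has a unique fixed point $v_\sigma$ and $g_1, g_2$ are both fixed points of $S_\sigma$, then $M_\sigma W_1 g_i = v_\sigma$ for $i = 1, 2$, so $g_i = W_0 M_\sigma W_1 g_i = W_0 v_\sigma$, giving $g_1 = g_2$; the reverse implication is symmetric. No genuine obstacle is anticipated, as the entire argument is purely algebraic and simply recycles Proposition~\ref{p:exu}; the only point requiring any real care is confirming that the intermediate objects lie in the designated sets $\vV$ and $\gG$ rather than in the ambient $\RR^\XX$ and $\RR^\FF$, which is exactly what the regularity condition \eqref{eq:f} is designed to secure.
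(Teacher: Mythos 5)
Your argument is correct and is essentially the paper's proof: the paper simply invokes its abstract cycle lemmas (Lemmas~\ref{l:mof}--\ref{l:exf} with $E_0 = \vV$, $E_1 = \gG$, $E_2 = \hH$, $\tau_2 = M_\sigma$), which encode exactly the computations you carry out directly — applying $W_0$ and $M_\sigma W_1$ to transfer fixed points and then using the mutual-inverse property on the fixed-point sets to transfer uniqueness. Your explicit check that intermediate objects stay in $\vV$, $\gG$, $\hH$ via \eqref{eq:f} matches the paper's bookkeeping following \eqref{eq:defms}.
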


Proposition~\ref{p:vsigi} implies that, under Assumption~\ref{a:ess}, 
there exists exactly one refactored $\sigma$-value function
$g_\sigma$ in $\gG$ for every $\sigma \in \Sigma$. 
Proposition~\ref{p:vsigi} is also useful for the converse
implication.  In particular, to establish Assumption~\ref{a:ess}, which is
often nontrivial when preferences are not additively separable, we can
equivalently show that $S_\sigma$ has a unique fixed point in $\gG$.

\section{Optimality}

\label{s:opt}

Next we turn to optimality, with a particular focus on the properties that
must be placed on a given plan factorization in order for the corresponding
(refactored) Bellman equation to lead us to optimal actions.
Our first step, however, is to define optimality and recall some standard results.

\subsection{Fixed Points and Optimal Policies}

The \textit{value function} associated with our dynamic program is defined at $x \in
\XX$ by
\begin{equation}
\label{eq:szvf}
v^*(x) = \sup_{\sigma \in \Sigma} v_{\sigma} (x).
\end{equation}
A feasible policy $\sigma^*$ is called \textit{optimal} if 
$v_{\sigma^*} = v^*$ on $\XX$.  Given $\sigma \in \Sigma$, \emph{Bellman's principle of optimality}
states that 
\begin{equation}
\label{eq:pop}
\sigma \text{ is an optimal policy } \iff \; \sigma \text{ is $v^*$-greedy}.
\end{equation}

The next theorem is a simple extension of foundational optimality results for
dynamic decision problems that
link the Bellman equation to optimality (see, e.g., \cite{bertsekas2013abstract}). Its proof is omitted.  
The assumptions of Section~\ref{ss:paa} are taken to be in force.

\begin{theorem}
	\label{t:bpost}
	The next two statements are equivalent:
	\begin{enumerate}
		\item The value function $v^*$ lies in $\VV$ and satisfies
		the Bellman equation \eqref{eq:be}.
		\item Bellman's principle of optimality \eqref{eq:pop} holds and the set of optimal
		policies is nonempty.
	\end{enumerate}
\end{theorem}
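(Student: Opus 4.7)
The plan is to prove the equivalence by establishing each direction separately, using Assumption~\ref{a:ess} to characterize $v_\sigma$ as the unique fixed point of $T_\sigma$ in $\vV$ and Assumption~\ref{a:maxex} to guarantee the existence of a greedy policy in $\Sigma$. The pointwise bound $T_\sigma \leq T$ on $\vV$, inherited from $M_\sigma \leq M$ in \eqref{eq:mvsms}, will repeatedly let me convert greediness statements into fixed point statements and vice versa.

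For (1) $\Rightarrow$ (2), suppose $v^* \in \VV$ satisfies $T v^* = v^*$. Assumption~\ref{a:maxex} delivers a $v^*$-greedy policy $\sigma^* \in \Sigma$, and greediness immediately gives $T_{\sigma^*} v^* = T v^* = v^*$. Thus $v^*$ is a fixed point of $T_{\sigma^*}$ in $\vV$, so Assumption~\ref{a:ess} identifies $v^*$ with $v_{\sigma^*}$; combined with the definition $v^*(x) = \sup_\sigma v_\sigma(x)$, this shows $\sigma^*$ attains the supremum and is therefore optimal. Bellman's principle then follows by a two-sided argument: if $\sigma$ is optimal, then $T_\sigma v^* = T_\sigma v_\sigma = v_\sigma = v^* = T v^*$, so $\sigma$ is $v^*$-greedy; conversely, if $\sigma$ is $v^*$-greedy, then $T_\sigma v^* = T v^* = v^*$, and Assumption~\ref{a:ess} forces $v_\sigma = v^*$, so $\sigma$ is optimal.

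For (2) $\Rightarrow$ (1), let $\sigma^*$ be any optimal policy, so that $v^* = v_{\sigma^*} \in \vV$. Bellman's principle makes $\sigma^*$ a $v^*$-greedy policy, which means $H(x, \sigma^*(x), v^*) = \sup_{a \in \Gamma(x)} H(x, a, v^*) = T v^*(x)$ at every $x \in \XX$. Combined with $v^*(x) = T_{\sigma^*} v^*(x) = H(x, \sigma^*(x), v^*)$ this yields $v^* = T v^*$, i.e., the Bellman equation. The residual step, and the main obstacle, is to confirm $v^* \in \VV$ rather than merely $v^* \in \vV$. Assumptions~\ref{a:maxex}--\ref{a:ess} alone do not force the stronger membership; the standard resolution is to observe that in the applications of interest $\VV$ is chosen precisely as the invariant class on which Assumption~\ref{a:maxex} is verified (e.g., bounded continuous or bounded measurable functions under Feller-type regularity), so the $\sigma^*$-value function $v_{\sigma^*}$ lies in $\VV$ by the same structural hypotheses that underwrite Assumption~\ref{a:maxex}.
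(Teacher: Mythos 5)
Your direction (1) $\Rightarrow$ (2) is correct and complete: Assumption~\ref{a:maxex} gives a $v^*$-greedy $\sigma^*$, the identity $T_{\sigma^*}v^* = Tv^* = v^*$ exhibits $v^*$ as a fixed point of $T_{\sigma^*}$, Assumption~\ref{a:ess} identifies it with $v_{\sigma^*}$, and your two-sided argument for \eqref{eq:pop} is the standard one. Note that the paper states Theorem~\ref{t:bpost} with its proof omitted (it is described as a simple extension of foundational results), so there is no official argument to compare against; your route is the natural one.

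The substantive issue is exactly the one you flag and then do not resolve: in (2) $\Rightarrow$ (1) you obtain $v^* = v_{\sigma^*} \in \vV$ and $Tv^* = v^*$, but not $v^* \in \VV$, and the closing appeal to ``applications of interest'' is not a proof of the statement as written. You are right that Assumptions~\ref{a:maxex}--\ref{a:ess} alone cannot force the membership: $\VV$ is only required to be $T$-invariant with greedy selectors, and, for instance, in a discounted finite-state Markov decision process the set $\VV = \{v \in \RR^\XX : v(x) > v^*(x) \text{ for all } x\}$ satisfies Assumption~\ref{a:maxex} (monotonicity of $T$ and $T(v^*+\epsilon) = v^* + \beta\epsilon$ give invariance, and greedy policies exist trivially) while excluding $v^*$, even though \eqref{eq:pop} holds and optimal policies exist. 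So the $\VV$-membership genuinely does not follow and must be supplied by the reading of the theorem rather than by the argument. The clean way to finish within the paper's own conventions is to note that in all of its examples, and whenever Assumption~\ref{a:exg} holds, one takes $\VV = \vV$, in which case $v^* = v_{\sigma^*} \in \vV = \VV$ is immediate from Assumption~\ref{a:ess} and your argument closes the direction completely; alternatively, add the hypothesis that $v_\sigma \in \VV$ for all $\sigma \in \Sigma$ (or directly that $v^* \in \VV$). State one of these reductions explicitly; as written, the (2) $\Rightarrow$ (1) direction is incomplete for a general $\VV$, though the defect lies in the generality of $\VV$ rather than in your strategy.
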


The motivation behind the transformations we consider in this paper is that
the Bellman equation can be refactored into a more convenient form without
affecting optimality.  Thus, it is natural to ask when---and under what
conditions---a result analogous
to Theorem~\ref{t:bpost} holds for the refactored Bellman equation
\eqref{eq:rbe}.  We will show that an analogous result obtains whenever a form of
monotonicity holds for the transformation being used in the plan
factorization.  

Before we get to this result, however, we need to address
the following complication: there are two
distinct functions that can take the part of $v^*$ in Theorem~\ref{t:bpost}. One
is the ``rotated value function''
\begin{equation}
\label{eq:defghat}
\hat g :=  W_0 \, v^*.
\end{equation}
The second is
\begin{equation}
\label{eq:svf}
g^*(x, a) := \sup_{\sigma \in \Sigma} g_\sigma(x, a).
\end{equation}
We call $g^*$ the \emph{refactored value function}.
The definition of $g^*$ directly parallels the definition of the value function in
\eqref{eq:szvf}, with $g^*(x, a)$ representing the maximal value that can be obtained from state
$x$ conditional on choosing $a$ in the current period. 
(Since Assumption~\ref{a:ess} is in force, the
set of functions $\{g_\sigma\}$ in the definition of $g^*$ is well defined
by Proposition~\ref{p:vsigi}, and hence so is $g^*$ as an extended
real-valued function, although it might or might not live in $\gG$.)

As shown below, the functions $\hat g$ and $g^*$ are not in general equal,
although they become so when a certain form of monotonicity is imposed.
Moreover, under this same monotonicity condition, if one and hence both of
these functions are fixed points of $S$, we obtain valuable optimality
results.  

In stating these results, we recall that a map $A$ from one
partially ordered set $(E, \leq)$ to another such set $(F, \leq)$ is
called \emph{monotone} if $x \leq y$ implies $Ax \leq Ay$.  Below,
monotonicity is with respect to the pointwise partial order on $\VV$,
$\GG$ and $\HH$. Moreover, given $g \in \GG$, a policy $\sigma \in \Sigma$ is
called \emph{$g$-greedy} if $M_\sigma \, W_1 \, g = M \, W_1 \, g$.
At least one $g$-greedy policy exists for every $g \in \GG$. Indeed, $g \in
\GG$ implies the existence of a $v \in \VV$ such that $g = W_0 \, v$, and to
this $v$ there corresponds a $v$-greedy policy $\sigma$ by
Assumption~\ref{a:maxex}. At this $\sigma$ we have $H(x, \sigma(x), v) =
\sup_{a \in \Gamma(x)} H(x, a, v)$ for every $x \in \XX$, or, equivalently,
$M_\sigma \, W_1 \, W_0 \, v = M \, W_1 \, W_0 \, v$ pointwise on $\XX$. Since
$g = W_0 \, v$, this policy is $g$-greedy.

We can now state our first significant optimality result.
It shows that, under some monotonicity conditions, the
standard Bellman equation and the refactored Bellman equation have ``equal
rights,'' in the sense that both can be used to obtain the same optimal
policy, and both satisfy a version of Bellman's principle of optimality.

\begin{theorem}
	\label{t:bpowm}
	Let $(W_0, W_1)$ be a plan factorization, let $\hat g$ be
	as defined in \eqref{eq:defghat} and let $g^*$ be as defined in
	\eqref{eq:svf}.
	If $W_0$ and $W_1$ are both monotone, then the following
	statements are equivalent:
	\begin{enumerate}
		\item $g^*$ lies in $\GG$ and satisfies the refactored Bellman equation \eqref{eq:rbe}.
		\item $v^*$ lies in $\VV$ and satisfies the Bellman equation \eqref{eq:be}.
	\end{enumerate}
	If these conditions hold, then $g^* = \hat g$,
	the set of optimal policies is nonempty and, for $\sigma \in \Sigma$, we
	have
	\begin{equation}
	\label{eq:pa}
	\sigma \text{ is an optimal policy}
	\iff
	\sigma \text{ is $g^*$-greedy}
	\iff
	\sigma \text{ is $v^*$-greedy}.
	\end{equation}
\end{theorem}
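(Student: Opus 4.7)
The plan is to prove $(1) \Leftrightarrow (2)$ by establishing each implication separately, and then to read off $g^* = \hat g$ and the greedy-policy characterizations from the proofs. A useful preliminary observation, valid whenever $W_0$ is monotone, is that $g^* \leq \hat g$ pointwise: by Proposition~\ref{p:vsigi} we have $g_\sigma = W_0 \, v_\sigma$, and since $v_\sigma \leq v^*$ by definition of $v^*$, monotonicity of $W_0$ gives $g_\sigma \leq \hat g$ for every $\sigma \in \Sigma$, whence $g^* = \sup_\sigma g_\sigma \leq \hat g$.

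For $(2) \Rightarrow (1)$, I would apply $W_0$ to both sides of the Bellman equation $v^* = M W_1 W_0 \, v^*$ to obtain $\hat g = W_0 M W_1 \hat g = S \hat g$, placing $\hat g$ in $\GG$ as a fixed point of $S$. Theorem~\ref{t:bpost} then provides an optimal policy $\sigma^*$ with $v_{\sigma^*} = v^*$, so $\hat g = W_0 v_{\sigma^*} = g_{\sigma^*} \leq g^*$. Together with the preliminary bound this forces $g^* = \hat g$, so $g^* \in \GG$ and $g^*$ satisfies the refactored Bellman equation.

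The harder direction is $(1) \Rightarrow (2)$. Since $g^* \in \GG$, I would write $g^* = W_0 v$ for some $v \in \VV$ and set $\tilde v := M W_1 g^* = Tv \in \VV$ (using Assumption~\ref{a:maxex}). The refactored fixed-point equation gives $W_0 \tilde v = g^*$, and therefore $T \tilde v = M W_1 W_0 \tilde v = M W_1 g^* = \tilde v$, so $\tilde v$ is a fixed point of $T$ in $\VV$. To identify $\tilde v$ with $v^*$, I would sandwich it from both sides. For the upper bound, monotonicity of $W_1$ applied to $g_\sigma \leq g^*$ together with the inequality \eqref{eq:mvsms} yields $v_\sigma = M_\sigma W_1 g_\sigma \leq M_\sigma W_1 g^* \leq M W_1 g^* = \tilde v$ for every $\sigma \in \Sigma$, hence $v^* \leq \tilde v$. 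For the lower bound, I would invoke Assumption~\ref{a:maxex} to pick a $\tilde v$-greedy policy $\sigma$; then $T_\sigma \tilde v = T \tilde v = \tilde v$, so $\tilde v$ is a fixed point of $T_\sigma$ in $\vV$, and Assumption~\ref{a:ess} forces $\tilde v = v_\sigma \leq v^*$. Consequently $v^* = \tilde v \in \VV$ and $v^* = T v^*$.

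Once both implications hold, the identity $g^* = \hat g = W_0 v^*$ produced above converts greedy policies between the two formulations: being $g^*$-greedy, i.e., $M_\sigma W_1 g^* = M W_1 g^*$, is identical to $M_\sigma W_1 W_0 v^* = M W_1 W_0 v^*$, i.e., $T_\sigma v^* = T v^*$, which is being $v^*$-greedy. Bellman's principle \eqref{eq:pop} and nonemptiness of the optimal set then follow from Theorem~\ref{t:bpost}. The main obstacle is the $(1) \Rightarrow (2)$ step, where $v^* = M W_1 g^*$ must be established without circularity; the key is to pivot through a $\tilde v$-greedy policy and use uniqueness of $\sigma$-value functions (Assumption~\ref{a:ess}) to pin down $\tilde v$ from below.
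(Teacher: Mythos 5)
Your proposal is correct and follows essentially the same route as the paper's proof: establish the equivalence by proving the two inequalities $v^* \leq M\,W_1\,g^*$ and $g^* \leq W_0\,v^*$ (and their reverses) via monotonicity, greedy-policy selection from Assumption~\ref{a:maxex}, and uniqueness of $\sigma$-value functions from Assumption~\ref{a:ess}, then read off $g^* = \hat g$ and \eqref{eq:pa}. The only differences are cosmetic: in the $(1)\Rightarrow(2)$ lower bound you pivot through $T_\sigma$ and uniqueness of $v_\sigma$ where the paper pivots through $S_\sigma$ and uniqueness of $g_\sigma$ (a mirror-image of the same argument), and in $(2)\Rightarrow(1)$ you cite Theorem~\ref{t:bpost} for an optimal policy where the paper takes a $v^*$-greedy policy directly.
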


\vspace{0.2em}

The monotonicity conditions on $W_0$ and $W_1$ clearly hold in all of the
example transformations discussed in Sections~\ref{sss:ql}--\ref{sss:os}. 
It is possible to envisage settings where they fail, however.
For example, if we replace the factorization $(W_0, W_0)$ in \eqref{eq:evrsk}
with 
\begin{equation*}
(W_0 \, v) (x, a) = 
\int \exp[ - \gamma v(x') ] P(x, a, \diff x') 
\end{equation*}
\begin{equation*}
\text{and} \quad
(W_1\,  g) (x, a) = r(x, a) - \frac{\beta}{\gamma} \log g(x, a),
\end{equation*}
%
we again have a valid plan factorization (since $(W_1 \, W_0 \, v)(x, a)
= H(x, a, v)$ when the latter is given by \eqref{eq:rsk}) but neither $W_0$
nor $W_1$ is monotone.

In general, monotonicity of $W_0$ and $W_1$ cannot be dropped from
Theorem~\ref{t:bpowm} without changing its conclusions.  In
Section~\ref{ss:counter} of the Appendix we exhibit dynamic programs and plan
factorizations that illustrate the following possibilities:
\begin{enumerate}
	\item $\hat g \not= g^*$.
	\item $Tv^* = v^*$ and yet $S g^* \not= g^*$.
	\item $S g^* = g^*$ and yet $Tv^* \not= v^*$.
\end{enumerate}

\subsection{Sufficient Conditions}

\label{s:suffcon}

Theorem~\ref{t:bpowm} tells us that if the stated monotonicity condition holds
and $S g^* = g^*$, then we can be assured of the existence of optimal policies
and have a means to characterize them.  What we lack is a set of sufficient
conditions under which the refactored Bellman operator has a unique fixed
point that is equal to $g^*$.  The theorem stated in this section fills that
gap.

To state the theorem, we recall that a self-mapping $A$ on a topological space $U$ is
called \emph{asymptotically stable} on $U$ if $A$ has a unique fixed point
$\bar u$ in $U$ and $A^n u \to \bar u$ as $n \to \infty$ for all $u \in U$.
In the following, we assume that there exists a Hausdorff topology $\tau$ on
$\gG$ under which the pointwise partial order is closed (i.e., its graph is
closed in the product topology on $\gG \times \gG$. The key implication is
that if $f_n \to f$ and $g_n \to g$ under $\tau$ and $f_n \leq g_n$ for all
$n$, then $f \leq g$). Asymptotic stability is with respect to this topology.

In the theorem below, $(W_0, W_1)$ is a given plan factorization and $g^*$ is
the refactored value function.

\begin{theorem}
	\label{t:asc}
	If $W_0$ and $W_1$ are monotone, $S$ is asymptotically stable on $\GG$ and $\{S_\sigma\}_{\sigma \in \Sigma}$ are asymptotically stable on $\gG$, then
	\begin{enumerate}
		\item $g^*$ is the unique solution to the refactored Bellman equation in
		$\GG$,
		\item $\lim_{k \to \infty} S^k g = g^*$ under $\tau$ 
		for all $g \in \GG$,
		\item at least one optimal policy exists, and
		\item a feasible policy is optimal if and only if it is $g^*$-greedy.
	\end{enumerate}
\end{theorem}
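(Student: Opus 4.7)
The plan is to show that the unique fixed point $\bar g$ of $S$ in $\GG$ coincides with the refactored value function $g^*$, and then invoke Theorem~\ref{t:bpowm} to harvest items (3) and (4); items (1) and (2) then follow from the asymptotic stability hypothesis on $S$. The asymptotic stability of each $S_\sigma$ on $\gG$ ensures that, for every $\sigma \in \Sigma$, the refactored $\sigma$-value function $g_\sigma$ exists in $\gG$, so the supremum defining $g^*$ is well-posed pointwise.

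As a first preparatory step, I would record that the operators at hand are monotone on the relevant spaces. The operator $M$ is monotone as a pointwise supremum, and $M_\sigma$ is trivially monotone since $M_\sigma h(x) = h(x, \sigma(x))$. Combined with the assumed monotonicity of $W_0$ and $W_1$, this yields monotonicity of both $S = W_0 M W_1$ and $S_\sigma = W_0 M_\sigma W_1$. Moreover, since $M_\sigma h \leq M h$ by \eqref{eq:mvsms}, monotonicity of $W_0$ gives the pointwise inequality $S_\sigma g \leq S g$ for every $g \in \gG$ and $\sigma \in \Sigma$.

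The core of the argument is the two-sided comparison of $\bar g$ with $g^*$. For the lower bound $\bar g \leq g^*$, use that $\bar g \in \GG$, so the paragraph preceding Theorem~\ref{t:bpowm} produces a $\bar g$-greedy policy $\sigma^\circ$, i.e.\ $M_{\sigma^\circ} W_1 \bar g = M W_1 \bar g$. Applying $W_0$ to both sides yields $S_{\sigma^\circ} \bar g = S \bar g = \bar g$, so $\bar g$ is a fixed point of $S_{\sigma^\circ}$ in $\gG$. Asymptotic stability of $S_{\sigma^\circ}$ on $\gG$ then forces $\bar g = g_{\sigma^\circ} \leq g^*$. For the upper bound $g^* \leq \bar g$, fix an arbitrary $\sigma \in \Sigma$. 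From $S_\sigma \bar g \leq S \bar g = \bar g$ and monotonicity of $S_\sigma$, induction gives $S_\sigma^n \bar g \leq \bar g$ for all $n$. Asymptotic stability of $S_\sigma$ on $\gG$ yields $S_\sigma^n \bar g \to g_\sigma$ under $\tau$, and the $\tau$-closedness of the pointwise order delivers $g_\sigma \leq \bar g$. Taking the supremum over $\sigma$ gives $g^* \leq \bar g$. Hence $g^* = \bar g \in \GG$ and $S g^* = g^*$.

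With these identifications, claim (1) is immediate, and (2) is the definition of asymptotic stability of $S$ on $\GG$. Since condition (1) of Theorem~\ref{t:bpowm} now holds and $W_0, W_1$ are monotone, that theorem supplies the existence of an optimal policy and the characterization of optimal policies as $g^*$-greedy, which are precisely (3) and (4). The main obstacle is the upper bound step: one has to pass to the limit in an inequality across the topology $\tau$, which works only because of the assumed closedness of the pointwise order and because $\bar g$ is known a priori to lie in $\gG$ so that the iterates $S_\sigma^n \bar g$ stay in the domain of asymptotic stability.
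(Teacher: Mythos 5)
Your proposal is correct and follows essentially the same route as the paper's proof: identify the unique fixed point $\bar g$ of $S$ with $g^*$ via the two-sided comparison (greedy policy plus uniqueness of the fixed point of $S_{\sigma}$ for $\bar g \leq g^*$; monotone iteration of $S_\sigma$ on $\bar g$, asymptotic stability and closedness of the order for $g^* \leq \bar g$), then read off (1)--(2) from asymptotic stability and (3)--(4) from Theorem~\ref{t:bpowm}. The only cosmetic difference is that you appeal to asymptotic stability of $S_\sigma$ for uniqueness of $g_\sigma$ where the paper cites Assumption~\ref{a:ess} and Proposition~\ref{p:vsigi}; both are available under the standing hypotheses.
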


In Theorem~\ref{t:asc} we eschewed a contractivity assumption on $S$ or
$S_\sigma$, since such an assumption is
problematic in certain applications (see, e.g., \cite{bertsekas2013abstract}).
Nevertheless, contraction methods can be applied to many other problems.  For
this reason we add the next proposition, which is a refactored analog of the
classical theory of dynamic programming based around contraction mappings.

Let $\| \cdot \|_\kappa$ be defined at $f \in \RR^\FF$ by $\| f \|_\kappa =
\sup \left| f/\kappa \right|$.  Here the supremum is over all $(x, \, a) \in
\FF$ and $\kappa$ is a fixed ``weighting function'', i.e., an element of
$\RR^\FF$ satisfying $\inf \kappa > 0$ on $\FF$.  This mapping defines a norm on
$b_\kappa \FF$, the set of $f \in \RR^\FF$ such that $\| f \|_\kappa <
\infty$.  

\begin{proposition}
	\label{p:cc}
	Let $W_0$ and $W_1$ be monotone. If $\gG$ and $\GG$ are closed subsets of $b_\kappa \FF$ and there exists a positive constant $\alpha$ such that $\alpha < 1$ and
	\begin{equation}
	\label{eq:ucons}
	\| S_\sigma \, g - S_\sigma \, g' \|_\kappa \leq \alpha \,  \| g - g' \|_\kappa
	\;\;
	\text{ for all } \, g, g' \in \gG \text{ and } \sigma \in \Sigma,
	\end{equation}
	then $S$ is a contraction mapping on $(\GG, \| \cdot \|_\kappa)$ of modulus $\alpha$, $S$ is asymptotically stable on $\GG$, and the conclusions of Theorem~\ref{t:asc} all hold.
\end{proposition}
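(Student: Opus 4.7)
The plan is to split the argument into three steps: (i) transfer the uniform $\gG$-contraction of $\{S_\sigma\}_{\sigma \in \Sigma}$ into a contraction for $S$ on $\GG$, (ii) apply the Banach fixed-point theorem to obtain asymptotic stability of both $S$ on $\GG$ and of each $S_\sigma$ on $\gG$, and (iii) verify that the $\| \cdot \|_\kappa$-topology is a valid choice of $\tau$ for Theorem~\ref{t:asc}, so that its four stated conclusions transfer verbatim.

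For step (i), the core observation is that $Sg$ is pointwise realized by $S_\sigma g$ for any $g$-greedy $\sigma$. Fix $g \in \GG$. The paragraph preceding Theorem~\ref{t:bpowm} guarantees the existence of a $g$-greedy $\sigma \in \Sigma$, and $g$-greediness gives $M_\sigma W_1 g = M W_1 g$, so $S_\sigma g = W_0 M_\sigma W_1 g = W_0 M W_1 g = Sg$ pointwise on $\FF$. On the other hand, \eqref{eq:mvsms} together with monotonicity of $W_0$ yields the universal bound $S_\sigma g' \leq S g'$ for every $g' \in \gG$ and every $\sigma \in \Sigma$. Combining these, for $g, g' \in \GG$ and $\sigma$ chosen to be $g$-greedy,
\begin{equation*}
S g - S g' \;=\; S_\sigma g - S g' \;\leq\; S_\sigma g - S_\sigma g' \;\leq\; \alpha \, \| g - g' \|_\kappa \, \kappa
\end{equation*}
pointwise on $\FF$, the last inequality being \eqref{eq:ucons} unpacked. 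Swapping the roles of $g$ and $g'$ (and using a $g'$-greedy policy) delivers the reverse bound, hence $\| Sg - Sg' \|_\kappa \leq \alpha \| g - g' \|_\kappa$.

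For step (ii), $(b_\kappa \FF, \| \cdot \|_\kappa)$ is a Banach space, $\GG$ and $\gG$ are closed subsets by hypothesis, and $S$ maps $\GG$ into $\GG$ while $S_\sigma$ maps $\gG$ into $\gG$ (as noted after \eqref{eq:dj} and \eqref{eq:sso}). The Banach fixed-point theorem then yields a unique fixed point $\bar g \in \GG$ of $S$ with $\| S^n g - \bar g \|_\kappa \to 0$ for every $g \in \GG$, and, applied to \eqref{eq:ucons}, asymptotic stability of each $S_\sigma$ on $\gG$. For step (iii), the norm topology on $b_\kappa \FF$ is Hausdorff and $\| \cdot \|_\kappa$-convergence implies pointwise convergence, so the pointwise partial order is preserved in the limit and hence closed in this topology. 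Taking $\tau$ to be the relative norm topology on $\gG$ therefore discharges the standing hypothesis of Theorem~\ref{t:asc}, and its conclusions (1)--(4) follow directly.

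The main obstacle is step (i): the contraction hypothesis is uniform in $\sigma$ but applies on $\gG$, whereas we need a contraction for the max operator $S$ on the smaller set $\GG$. The device that bridges the gap is the existence of $g$-greedy policies for each $g \in \GG$, which allows the maximum implicit in $Sg$ to be realized by a particular $S_\sigma g$; this ensures that the contraction constant $\alpha$ is inherited without enlargement, which is the only non-mechanical ingredient in the argument.
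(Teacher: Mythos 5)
Your proposal is correct and follows essentially the same route as the paper: both arguments hinge on realizing $Sg$ as $S_{\sigma}g$ for a $g$-greedy $\sigma$ while monotonicity of $W_0$ gives $S_{\sigma}g' \leq Sg'$ uniformly (the paper packages this as $Sg = \sup_{\sigma} S_{\sigma}g$ and bounds the difference of suprema, which is the same device as your one-sided bound plus role swap), and both then invoke the Banach contraction theorem on the closed sets $\GG$ and $\gG$ and note that the $\|\cdot\|_\kappa$-topology is Hausdorff with a closed pointwise order, so Theorem~\ref{t:asc} applies.
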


The purpose of $\kappa$ here is to control for unbounded rewards (see, e.g,
\cite{bertsekas2013abstract}).  When rewards are bounded we can take $\kappa
\equiv 1$, in which case $\| \cdot \|_\kappa$ is the ordinary supremum norm
and $b_\kappa \FF$ is just the bounded functions on $\FF$.
Notice also that the contractivity requirement in \eqref{eq:ucons} is imposed
on $S_\sigma$ rather than $S$, making the condition easier to verify (since
$S_\sigma$ does not involve a maximization step).

\subsection{Policy Iteration}

\label{ss:rfpi}

There are algorithms besides value function iteration that achieve faster
convergence in some applications.  These include Howard's policy iteration
algorithm and a popular variant called optimistic policy iteration (see, e.g.,
\cite{puterman1982action} or \cite{bertsekas2017dynamic}). Optimistic policy
iteration is important because it includes value function iteration and
Howard's policy iteration algorithm as limiting cases.  In this section we
give conditions under which optimistic policy iteration is successful in the
setting of a given plan factorization $(W_0, W_1)$. We make the following assumption.

\begin{assumption}
	\label{a:exg}
	At least one $v$-greedy policy exists in $\Sigma$ for each $v$ in $\vV$.
\end{assumption}

Note that Assumption~\ref{a:exg} implies Assumption~\ref{a:maxex}. In particular, by Assumption~\ref{a:exg}, for each $v \in \vV$, there exists a $\sigma$ in $\Sigma$ such that $Tv (x) = H(x, \sigma(x), v)$ for all $x \in \XX$. \eqref{eq:f} then implies that $Tv \in \vV$. Therefore, $T$ is a self-map on $\vV$ and Assumption~\ref{a:maxex} holds by letting $\VV = \vV$.

The standard algorithm starts with an initial candidate $v_0 \in \vV$ and
generates sequences $\{ \sigma_k^v\}$, $\{\Sigma_k^v \}$ in $\Sigma$ and
$\{v_k \}$ in $\vV$ by taking
\begin{equation}
\label{eq:mpi}
\sigma_k^v \in \Sigma_k^v
\quad \text{and} \quad
v_{k+1} = T_{\sigma_k^v}^{m_k} \, v_k
\quad \text{for all } \, 
k \in \NN_0,
\end{equation}
where $\Sigma_k^v$ is the set of $v_k$-greedy policies, and 
$\{ m_k\}$ is a sequence of positive integers. 
The first step of equation \eqref{eq:mpi} is called \textit{policy improvement}, 
while the second step is called \textit{partial policy evaluation}. 
If $m_k = 1$ for all $k$ then the algorithm reduces to value function
iteration. 

To extend this idea to the refactored case, we take an initial candidate $g_0
\in \gG$ and generate sequences $\{\sigma_k^g\}$, $\{\Sigma_k^g \}$ in $\Sigma$
and $\{ g_k \}$ in $\gG$ via
\begin{equation}
\label{eq:mpi2}
\sigma_k^g \in \Sigma_k^g
\quad \text{and} \quad
g_{k+1} = S_{\sigma_k^g}^{m_k} \, g_k
\quad \text{for all } \, 
k \in \NN_0,
\end{equation}
where $\Sigma_k^g$ is the set of $g_k$-greedy policies. The next result shows that \eqref{eq:mpi} and \eqref{eq:mpi2} indeed generate the same sequences of greedy policies.

\begin{theorem}
	\label{t:mpi}
	If $v_0 \in \vV$ and $g_0 = W_0 \, v_0$, then 
	sequences $\{ \sigma_k \}$ and $\{ \Sigma_k \}$ in $\Sigma$ satisfy 
	\eqref{eq:mpi} if and only if they satisfy \eqref{eq:mpi2}.
	Moreover, the generated $\{v_k\}$ and $\{g_k\}$ sequences satisfy $g_k = W_0  \, v_k$ for all $k$.
\end{theorem}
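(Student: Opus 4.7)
The plan is an induction on $k$ that carries the invariant $g_k = W_0 \, v_k$ through both the policy improvement step and the partial policy evaluation step. The base case is supplied by the hypothesis $g_0 = W_0 \, v_0$, so the argument reduces to two sub-claims whenever the invariant holds at stage $k$: first, the set $\Sigma_k^v$ of $v_k$-greedy policies coincides with the set $\Sigma_k^g$ of $g_k$-greedy policies; second, applying $m_k$ iterates of $S_{\sigma_k}$ to $W_0 \, v_k$ returns $W_0$ composed with $m_k$ iterates of $T_{\sigma_k}$ on $v_k$.

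For the first sub-claim, I would unfold the definitions. A policy $\sigma$ is $v_k$-greedy iff $M_\sigma \, W_1 \, W_0 \, v_k = M \, W_1 \, W_0 \, v_k$, using $T_\sigma = M_\sigma \, W_1 \, W_0$ and $T = M \, W_1 \, W_0$. A policy $\sigma$ is $g_k$-greedy iff $M_\sigma \, W_1 \, g_k = M \, W_1 \, g_k$. Substituting $g_k = W_0 \, v_k$ shows these two conditions are literally identical, so $\Sigma_k^v = \Sigma_k^g$ and the two recursions draw candidate policies from the same set at stage $k$.

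For the second sub-claim, I would invoke Proposition~\ref{p:iter}, which gives $S_\sigma^n = W_0 \, T_\sigma^{n-1} \, M_\sigma \, W_1$. Applied to $g_k = W_0 \, v_k$, this yields
\begin{equation*}
S_{\sigma_k}^{m_k} g_k
= W_0 \, T_{\sigma_k}^{m_k - 1} \, M_{\sigma_k} \, W_1 \, W_0 \, v_k
= W_0 \, T_{\sigma_k}^{m_k - 1} \, T_{\sigma_k} \, v_k
= W_0 \, T_{\sigma_k}^{m_k} v_k
= W_0 \, v_{k+1},
\end{equation*}
where the second equality uses $M_{\sigma_k} \, W_1 \, W_0 = T_{\sigma_k}$. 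This establishes the invariant at stage $k+1$ and closes the induction; the same argument runs in either direction to give the claimed equivalence between \eqref{eq:mpi} and \eqref{eq:mpi2}.

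I do not anticipate any serious obstacle here: Proposition~\ref{p:iter} has already handled the nontrivial commutation of $W_0$ past powers of the $\sigma$-value operator, and the greediness equivalence is immediate once one observes that both notions of greediness reduce to an equality of the form $M_\sigma \, \phi = M \, \phi$ applied to the common argument $\phi = W_1 \, g_k = W_1 \, W_0 \, v_k$. The theorem is in essence bookkeeping built on top of these two ingredients together with the factorization identities \eqref{eq:dt}, \eqref{eq:dj}, \eqref{eq:sto}, and \eqref{eq:sso}.
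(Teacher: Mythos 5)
Your proposal is correct and follows essentially the same route as the paper: induction on $k$ maintaining the invariant $g_k = W_0\, v_k$, with the greedy-set identity $\Sigma_k^v = \Sigma_k^g$ obtained by substituting $g_k = W_0\, v_k$ into the definitions, and the evaluation step handled by Proposition~\ref{p:iter} exactly as you write it.
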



Moreover, optimistic policy iteration via the refactored Bellman operator converges, as for the standard Bellman operator:

\begin{theorem}
	\label{t:hpi_conv}
	Let $W_0$ and $W_1$ be monotone and let $S$ and $\{S_\sigma \}_{\sigma \in \Sigma}$ be asymptotically stable on $\gG$.  
	If $g_0 \leq S g_0$, then $g_k \leq g_{k+1}$ for all $k$ and $g_k \to g^*$ 
	as $k \to \infty$.
\end{theorem}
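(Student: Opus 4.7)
The plan is to (i) establish two monotonicity facts relating $S$ and $S_\sigma$, (ii) prove by induction the invariant $g_k \leq S g_k$, which simultaneously yields $g_k \leq g_{k+1}$, and (iii) sandwich $g_k$ between $S^k g_0$ and $g^*$ to obtain convergence via the asymptotic stability of $S$ on $\gG$. For the preliminaries, note that by \eqref{eq:mvsms} and monotonicity of $W_0$ we have $S_\sigma g \leq S g$ for every $\sigma \in \Sigma$ and $g \in \gG$; moreover, $\sigma$ is $g$-greedy precisely when $M_\sigma W_1 g = M W_1 g$, and applying $W_0$ gives $S_\sigma g = S g$. In particular $S_{\sigma_k} g_k = S g_k$ for each $k$. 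Since $W_0$, $W_1$ and $M_\sigma$ (evaluation at $\sigma(\cdot)$) are each monotone, so are $S$ and each $S_\sigma$.

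For stage (ii), I would argue by induction on $k$ that $g_k \leq S g_k$. The base case is the hypothesis. Given $g_k \leq S g_k$, greediness of $\sigma_k$ gives $g_k \leq S_{\sigma_k} g_k$, and $m_k$ applications of the monotone operator $S_{\sigma_k}$ yield
\begin{equation*}
g_k \leq S_{\sigma_k} g_k \leq S_{\sigma_k}^2 g_k \leq \cdots \leq S_{\sigma_k}^{m_k} g_k = g_{k+1},
\end{equation*}
which proves $g_k \leq g_{k+1}$. To restore the invariant at step $k+1$, I would chain
\begin{equation*}
g_{k+1} = S_{\sigma_k}^{m_k} g_k \leq S_{\sigma_k}^{m_k}(S_{\sigma_k} g_k) = S_{\sigma_k} g_{k+1} \leq S g_{k+1},
\end{equation*}
where the first inequality uses $g_k \leq S_{\sigma_k} g_k$ together with monotonicity of $S_{\sigma_k}^{m_k}$, and the last uses $S_\sigma \leq S$.

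For stage (iii), I would build the sandwich. On the lower side, $g_{k+1} \geq S_{\sigma_k} g_k = S g_k$, so iterating and using monotonicity of $S$ gives $g_k \geq S^k g_0$. On the upper side, iterating $g_0 \leq S g_0$ produces an increasing sequence $\{S^k g_0\}$ converging in $\tau$ to $g^*$ by Theorem~\ref{t:asc}; since the pointwise order is $\tau$-closed, $g_0 \leq g^*$. Inductively, if $g_k \leq g^*$, then monotonicity of $S_{\sigma_k}$, together with $S_\sigma \leq S$ and $S g^* = g^*$, give $S_{\sigma_k}^j g_k \leq g^*$ for every $j$, so $g_{k+1} \leq g^*$. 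Combining, $S^k g_0 \leq g_k \leq g^*$ for every $k$. Since $S^k g_0 \to g^*$ in $\tau$ and the order is $\tau$-closed, a squeeze argument forces $g_k \to g^*$.

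The main obstacle is the inductive step in stage (ii): propagating the sub-fixed-point property $g_k \leq S g_k$ through $m_k$ applications of $S_{\sigma_k}$ and then upgrading back to $S$ at step $k+1$. This hinges on the combination of monotonicity of the policy operator, greediness of $\sigma_k$ at $g_k$, and the universal inequality $S_\sigma \leq S$. The rest is a standard monotone-sandwich argument, relying on the asymptotic stability of $S$ supplied by Theorem~\ref{t:asc}; the $\{S_\sigma\}$-stability assumption enters only implicitly, ensuring that $g^*$ is well defined as a supremum of the $g_\sigma$'s.
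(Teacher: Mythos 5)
Your proof is correct and takes essentially the same route as the paper: the inductive invariant $g_k \leq S g_k$ (via greediness, monotonicity, and $S_\sigma \leq S$), the resulting monotone chain $g_k \leq S_{\sigma_k} g_k \leq \cdots \leq g_{k+1}$, and the sandwich $S^k g_0 \leq g_k \leq g^*$ closed out by asymptotic stability of $S$ from Theorem~\ref{t:asc} and closedness of the partial order. The only cosmetic differences are that you restore the invariant through $g_{k+1} \leq S_{\sigma_k} g_{k+1} \leq S g_{k+1}$ and get the upper bound $g_{k+1} \leq g^*$ directly from $g_k \leq g^*$ and $S g^* = g^*$, whereas the paper routes the first through $S S_{\sigma_k}^{m-1} S g_k$ and obtains the second by iterating $g_{k+1} \leq S^t g_{k+1}$ and passing to the limit.
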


\begin{example}
	\label{eg:ssigc}
	In \eqref{eq:ssigfmdp}, we gave the 
	refactored $\sigma$-value operator $S_\sigma$ associated with the 
	expected value function factorization of a finite state
	Markov decision process. Recall that we set $\vV$ and $\VV$ to be the set of functions from $\XX$ to $\RR$. So Assumption~\ref{a:exg} holds based on the analysis of Section~\ref{ss:paa}. It is straightforward to check that 
	this choice of $S_\sigma$ satisfies \eqref{eq:ucons} when $\alpha$ is set
	to the discount factor $\beta$ and $\kappa \equiv 1$.  Since the operators
	$W_0$ and $W_1$ associated with this plan factorization are both monotone
	(see \eqref{eq:evpfs}), the conclusions of Theorems~\ref{t:asc},
	\ref{t:mpi} and \ref{t:hpi_conv} are all valid.
\end{example}

\begin{example}
	\label{eg:osrfc}
	The refactored $\sigma$-value operator $S_\sigma$ corresponding to 
	the plan factorization used in the optimal
	stopping problem from Section~\ref{sss:os} was given in 
	\eqref{eq:osrfo}. Since both $\vV$ and $\VV$ are the set of bounded continuous functions on $\XX$, Assumption~\ref{a:exg} has been verified in Section~\ref{sss:os}. Moreover, it is straightforward to show that 
	$S_\sigma$ satisfies \eqref{eq:ucons} when $\alpha := \beta$
	and $\kappa \equiv 1$.  Hence,  for this plan factorization of the optimal
	stopping problem, the conclusions of
	Theorems~\ref{t:asc}--\ref{t:hpi_conv} hold.
\end{example}

\section{Applications}

\label{s:aii}

In this section we connect the theory presented above with applications. In each case, we use transformations of the Bellman equation that reduce dimensionality and enhance computational efficiency. Throughout, we use $\EE_{y|x}$ to denote the expectation of $y$ conditional on $x$.
Code that replicates all of our numerical results can be found at
\url{https://github.com/jstac/dp_deconstructed_code}.

\subsection{Consumer Bankruptcy}
\label{ss:cb}

\cite{livshits2007consumer} analyze consumer bankruptcy rules by building a
dynamic model of household choice with earnings and expense uncertainty.  In
the model (slightly streamlined), a household's time $t$ income is $z_t \eta_t
$, where $z_t$ and $\eta_t$ are the persistent and transitory components of
productivity.  Households face an expense shock $\kappa_t \geq 0$. While $\{
z_t\}$ is Markov, $\{ \eta_t\}$ and $\{ \kappa_t\}$ are {\sc iid}. All are
mutually independent.  
The Bellman equation takes the form
\begin{equation}
\label{eq:bebi}
v(i, d, z, \eta, \kappa) = \max_{c, \, d', \, i'}
\left[
u(c) + \beta \, \EE_{z', \eta',\kappa' \mid z} \, v(i', d', z', \eta', \kappa')
\right],
\end{equation}
where $d$ is debt, $c$ is current consumption, $\beta$ is a discount factor, 
and $u$ is one-period utility. 
Primes denote next period values.  The state
$i$ indicates repayment status and lies in $\{R, B, E\}$, where $R$, $B$ and $E$
correspond to repayment, bankruptcy and default on current expenses.
With $q$ and $\bar r$ indicating the interest rate for a household in states
$R$ and $E$ respectively,
and $\gamma$ parameterizing compulsory
repayments out of current income, the constraints can be expressed as follows:
All variables are nonnegative and, in addition,
\begin{itemize}
	\item[C1.]  if $i=R$, then $c + d + \kappa =  z \eta + q (d', z) d'$ and
	$i' \in \{R, B\}$.
	\item[C2.]  If $i=B$, then $c = (1 - \gamma)  z \eta$, $d' = 0$ and $i' \in
	\{R, E\}$.
	\item[C3.]  If $i=E$, then $c = (1 - \gamma)  z \eta$, $d' = (\kappa -
	\gamma  z \eta) (1 + \bar{r})$ and $i' \in \{R, B\}$.
\end{itemize}
A full interpretation of these constraints and the decision problem of the
household can be found on p.~407 of \cite{livshits2007consumer}.
As in that paper, to implement the model computationally, we assume that all
states are discrete.  In particular, $\eta$, $\kappa$ and $z$ take values in
finite sets, and the choice of debt level $d$ is restricted to a finite grid
with maximum $\bar d$.  The model then becomes a special case of the finite
state Markov decision process described in Example~\ref{eg:fmdp}.  The
one-period reward is $u(c)$ and the feasible correspondence $\Gamma$ is
defined by C1--C3 above.  The state is $x = (i, d, z, \eta, \kappa)$ and the
state space $\XX$ is the Cartesian product of $\{R, B, E\}$, for $i$, and the
grids for the remaining variables.   The action is $a = (i', d', c)$ and the
sets of candidate value functions are $\vV = \VV = \RR^\XX$.

A fast algorithm for solving this model is important because it allows for
effective exploration of the parameter space during the estimation step (by
solving the model many times at different parameterizations).  We
can greatly accelerate value function iteration for this model by implementing
a suitable plan factorization and then iterating with the refactored Bellman
operator $S$ instead of the original Bellman operator $T$.  
To see why this can deliver acceleration, recall from Figure~\ref{f:triangle} and
the definition of $S$ in \eqref{eq:dj} that one iteration of $S$ is a cycle
starting from $\GG = W_0 \VV$, while $T$ is a cycle starting from $\VV$.  If
functions in $\GG$ are simpler than functions in $\VV$ then the former
will, in general, be faster than the latter.
Similar comments apply when we consider applying $S_\sigma$ instead of
$T_\sigma$, as occurs during policy function iteration.

For example, suppose that we adopt the plan factorization associated with the
expected value function transformation from Section~\ref{sss:evt}.  In view of
the discussion in Example~\ref{eg:ssigc}, we know that refactored value
function iteration and refactored policy function iteration are convergent
under this plan factorization.
Moreover,  since,
$\GG = W_0 \VV$, a typical element of $\GG$ is a function $g = W_0 \, v$,
which, in view of the definition of the expected value function in
\eqref{eq:evpfs}, has the form $g(x, a) = \sum_{x' \in \XX} v(x') p(x, a,
x')$.  In the setting of \eqref{eq:bebi}, this becomes 
$g(i', d', z) = \EE_{z', \eta', \kappa' \mid z} \, v(i', d', z', \eta', \kappa')$.  Significantly, while
the standard value function has five arguments, this refactored value function
has only three. 

To test the expected speed gains, we perform two groups of numerical
experiments.  In each case we compare traditional value function iteration 
(VFI, iterating with $T$) with refactored value function iteration (RVFI, iterating with
$S$).  Regarding the primitives, we set $\gamma = 0.355$, $\bar r = 0.2$ and $u(c) = c^{1 - \sigma} /
(1 - \sigma)$ with $\sigma = 2.0$. We assume that $\{\eta_t \} \iidsim N(0,
0.043)$ and $\{z_t\}$ follows $\log z_{t+1} = \rho \log z_t + \epsilon_{t+1}$
with $\{\epsilon_t\} \iidsim N(0, \delta^2)$. Both are discretized
via the method of \cite{tauchen1986finite}. We discretize $\{\kappa_t\}$ and
$\{d_t\}$ in equidistant steps, where $\{\kappa_t\}$ is uniformly distributed on
$[0,2]$ and the grid points for $d$ lie in $[0, 10]$. In Group-1, we set
$\rho=0.99$ and $\delta^2 = 0.007$ and compare the time taken of VFI and RVFI
for different levels of grid points and $\beta$ values. In Group-2, we set
$\beta=0.98$ and continue the analysis by comparing RVFI and VFI across
different grid sizes and $(\rho, \delta)$ values. The parameter values are
broadly in line with \cite{livshits2007consumer}. Each scenario, we
terminate iteration when distance between successive iterates falls below $10^{-4}$ under the supremum norm.  
(The code uses Python with Numba on a workstation with a 2.9 GHz
Intel Core i7 and 32GB RAM.)

\begin{table}
	\caption{Time taken in seconds: Group-1 experiments}
	\vspace{-0.85cm}
	\label{tb:cmpl_betavar}
	\noindent 
	\begin{center}
		\begin{tabular}{|c|c|c|c|c|c|c|}
			\hline 
			grid size for $(d,z,\eta,\kappa)$ & method & $\beta=0.94$ & $\beta=0.95$ & $\beta=0.96$ & $\beta=0.97$ & $\beta=0.98$\tabularnewline
			\hline 
			\hline 
			\multirow{3}{*}{$(10,10,10,10)$} & VFI & 20.75 & 24.17 & 30.13 & 40.07 & 59.77\tabularnewline
			\cline{2-7} 
			& RVFI & 0.88 & 0.89 & 1.07 & 1.23 & 1.49\tabularnewline
			\cline{2-7} 
			& Ratio & \textbf{23.58} & \textbf{27.16} & \textbf{28.16} & \textbf{32.58} & \textbf{40.11}\tabularnewline
			\hline 
			\multirow{3}{*}{$(12,12,12,12)$} & VFI & 92.78 & 110.06 & 138.15 & 184.59 & 277.13\tabularnewline
			\cline{2-7} 
			& RVFI & 1.46 & 1.56 & 1.81 & 2.23 & 3.00\tabularnewline
			\cline{2-7} 
			& Ratio & \textbf{63.55} & \textbf{70.55} & \textbf{76.33} & \textbf{82.78} & \textbf{92.38}\tabularnewline
			\hline 
			\multirow{3}{*}{$(14,14,14,14)$} & VFI & 321.53 & 387.00 & 484.94 & 648.58 & 978.66\tabularnewline
			\cline{2-7} 
			& RVFI & 2.55 & 2.91 & 3.49 & 4.65 & 6.40\tabularnewline
			\cline{2-7} 
			& Ratio & \textbf{126.09} & \textbf{132.99} & \textbf{138.95} & \textbf{139.48} & \textbf{152.92}\tabularnewline
			\hline 
			\multirow{3}{*}{$(16,16,16,16)$} & VFI & 1445.53 & 1738.22 & 2175.40 & 2904.84 & 4381.13\tabularnewline
			\cline{2-7} 
			& RVFI & 4.92 & 5.75 & 7.12 & 9.45 & 13.65\tabularnewline
			\cline{2-7} 
			& Ratio & \textbf{293.81} & \textbf{302.30} & \textbf{305.53} & \textbf{307.39} & \textbf{320.96}\tabularnewline
			\hline 
			\multirow{3}{*}{$(18,18,18,18)$} & VFI & 2412.84 & 2889.44 & 3614.84 & 4865.62 & 7266.21\tabularnewline
			\cline{2-7} 
			& RVFI & 10.94 & 12.92 & 16.14 & 21.99 & 33.06\tabularnewline
			\cline{2-7} 
			& Ratio & \textbf{220.55} & \textbf{223.64} & \textbf{223.97} & \textbf{221.27} & \textbf{219.79}\tabularnewline
			\hline 
			\multirow{3}{*}{$(20,20,20,20)$} & VFI & 5591.37 & 6737.69 & 8420.48 & 11355.90 & 17020.04\tabularnewline
			\cline{2-7} 
			& RVFI & 19.78 & 23.80 & 31.16 & 41.00 & 62.09\tabularnewline
			\cline{2-7} 
			& Ratio & \textbf{282.68} & \textbf{283.10} & \textbf{270.23} & \textbf{276.97} & \textbf{274.12}\tabularnewline
			\hline 
		\end{tabular}
		\par\end{center}
\end{table}

\begin{table}
	\caption{Time taken in seconds: Group-2 experiments}
	\vspace{-0.85cm}
	\label{tb:cmpl_rhodel}
	\noindent 
	\begin{center}
		\begin{tabular}{|c|c|c|c|c|c|c|}
			\hline 
			grid size for $(d,z,\eta,\kappa)$ & method & $\rho=0.96$ & $\rho=0.97$ & $\rho=0.98$ & $\rho=0.995$ & \tabularnewline
			\hline 
			\multirow{6}{*}{$(10,10,10,10)$} & VFI & 60.83 & 59.66 & 57.69 & 66.70 & \multirow{3}{*}{$\delta^{2}=0.01$}\tabularnewline
			\cline{2-6} 
			& RVFI & 1.49 & 1.43 & 1.43 & 1.49 & \tabularnewline
			\cline{2-6} 
			& Ratio & \textbf{40.83} & \textbf{41.72} & \textbf{40.34} & \textbf{44.77} & \tabularnewline
			\cline{2-7} 
			& VFI & 68.43 & 62.49 & 59.73 & 75.71 & \multirow{3}{*}{$\delta^{2}=0.04$}\tabularnewline
			\cline{2-6} 
			& RVFI & 1.65 & 1.48 & 1.44 & 1.39 & \tabularnewline
			\cline{2-6} 
			& Ratio & \textbf{41.47} & \textbf{42.22} & \textbf{41.48} & \textbf{54.47} & \tabularnewline
			\hline 
			\multirow{6}{*}{$(12,12,12,12)$} & VFI & 255.11 & 274.48 & 268.12 & 327.55 & \multirow{3}{*}{$\delta^{2}=0.01$}\tabularnewline
			\cline{2-6} 
			& RVFI & 2.98 & 2.89 & 2.96 & 3.36 & \tabularnewline
			\cline{2-6} 
			& Ratio & \textbf{85.61} & \textbf{94.98} & \textbf{90.58} & \textbf{97.49} & \tabularnewline
			\cline{2-7} 
			& VFI & 282.59 & 272.91 & 266.27 & 293.99 & \multirow{3}{*}{$\delta^{2}=0.04$}\tabularnewline
			\cline{2-6} 
			& RVFI & 3.46 & 2.96 & 2.92 & 3.05 & \tabularnewline
			\cline{2-6} 
			& Ratio & \textbf{81.67} & \textbf{92.20} & \textbf{91.19} & \textbf{96.39} & \tabularnewline
			\hline 
			\multirow{6}{*}{$(14,14,14,14)$} & VFI & 881.84 & 993.94 & 934.95 & 1012.90 & \multirow{3}{*}{$\delta^{2}=0.01$}\tabularnewline
			\cline{2-6} 
			& RVFI & 6.25 & 6.53 & 6.18 & 6.66 & \tabularnewline
			\cline{2-6} 
			& Ratio & \textbf{141.09} & \textbf{152.21} & \textbf{151.29} & \textbf{152.09} & \tabularnewline
			\cline{2-7} 
			& VFI & 1045.88 & 892.71 & 928.10 & 984.25 & \multirow{3}{*}{$\delta^{2}=0.04$}\tabularnewline
			\cline{2-6} 
			& RVFI & 7.47 & 6.24 & 6.18 & 6.64 & \tabularnewline
			\cline{2-6} 
			& Ratio & \textbf{140.01} & \textbf{143.06} & \textbf{150.18} & \textbf{148.23} & \tabularnewline
			\hline 
			\multirow{6}{*}{$(16,16,16,16)$} & VFI & 2739.10 & 2722.26 & 3113.68 & 7827.82 & \multirow{3}{*}{$\delta^{2}=0.01$}\tabularnewline
			\cline{2-6} 
			& RVFI & 12.81 & 12.75 & 14.50 & 15.03 & \tabularnewline
			\cline{2-6} 
			& Ratio & \textbf{213.83} & \textbf{213.51} & \textbf{214.74} & \textbf{520.81} & \tabularnewline
			\cline{2-7} 
			& VFI & 2843.06 & 2683.64 & 2984.70 & 7190.32 & \multirow{3}{*}{$\delta^{2}=0.04$}\tabularnewline
			\cline{2-6} 
			& RVFI & 13.19 & 12.60 & 13.63 & 13.69 & \tabularnewline
			\cline{2-6} 
			& Ratio & \textbf{215.55} & \textbf{212.99} & \textbf{218.98} & \textbf{525.22} & \tabularnewline
			\hline 
		\end{tabular}
		\par\end{center}
\end{table}

The results are shown in
Tables~\ref{tb:cmpl_betavar}--\ref{tb:cmpl_rhodel}. 
In particular, \textit{ratio} therein represents the ratio of time taken by VFI to that by RVFI. 
In both groups of experiments, reduced dimensionality 
significantly enhances computational efficiency. The speed improvement of RVFI
over VFI reaches two orders of magnitude under a relatively sparse grid
and becomes larger as the number of grid points increase---which is
precisely when computational efficiency is required.
For example, when $(\rho, \delta) = (0.995, 0.2)$ and there are $16$ grid
points for each state variable, RVFI is 525 times
faster than VFI.

\subsection{Optimal Stopping}
\label{ss:os}

Recall the plan factorization used in the optimal stopping problem from Section~\ref{sss:os}, 
corresponding to \eqref{eq:rbo2_os}. We showed in Example~\ref{eg:osrfc} that
the conclusions of Theorems~\ref{t:asc}--\ref{t:hpi_conv} hold, so we can
freely use either the traditional Bellman operator or the refactored version to
compute the optimal policy.  The best choice depends on relative numerical efficiency
of the two operators.

One setting where the refactored Bellman operator is always more numerically efficient is
when the state can be decomposed as
\begin{equation}
\label{eq:cin}
x_t = (y_t, z_t) \in \YY \times \ZZ \subset \RR^\ell \times \RR^k,
\end{equation}
where $(y_{t+1}, z_{t+1})$ and $y_t$ are independent given $z_t$.

The refactored Bellman operator from \eqref{eq:rbo2_os} then reduces to 
\begin{equation*}
(Sg)(z) = 
\int \max \left\{ r(y',z'), \, c(y',z') + \beta g(z') \right\} 
P(z, \diff (y', z')),
\end{equation*}
while the standard Bellman operator is
\begin{equation*}
(Tv)(y, z) = 
\max \left\{ r(y,z), \, c(y,z) + \beta \int v(y', z') P(z, \diff (y', z')) \right\} . 
\end{equation*}
The key difference is that the functions $v \in \VV$, on which $T$ acts,
depend on both $z$ and $y$, whereas the functions $g \in \GG$ on which $S$
acts depend only on $z$.  In other words, the refactored Bellman operator acts on 
functions defined over a lower dimension space than the original state space $\XX = \YY \times \ZZ$.
Hence, when the conditional independence assumption in \eqref{eq:cin} is
valid, the curse of dimensionality is mitigated by working with $S$ rather
than $T$.  Below we quantify the difference, after giving some examples of
settings where \eqref{eq:cin} holds.

\begin{example}
	\label{eg:real_option}
	Consider the problem of pricing a real or financial option (see, e.g.,
	\cite{dixit1994investment} or \cite{kellogg2014effect}). Let $p_t$ be the
	current price of the asset and let $\{s_t\}$ be a Markov
	process affecting asset price via $p_t = f(s_t, \epsilon_t)$, where 
	$\{\epsilon_t \}$ is an {\sc iid} innovation
	process independent of $\{s_t\}$. 
	Each period, the agent decides whether to exercise the option now (i.e.,
	purchase the asset at the strike price) or wait and reconsider next period. 
	Note that $(p_{t+1}, s_{t+1})$ and $p_t$ are independent given $s_t$.
	Hence, this problem fits into the framework of \eqref{eq:cin} if we
	take $y_t = p_t$ and $z_t = s_t$. 
\end{example}

\begin{example}
	Consider a model of job search (see, e.g.,
	\cite{mccall1970economics}), in which a worker receives current wage offer
	$w_t$ and chooses to either accept and work permanently at that wage, or
	reject the offer, receive unemployment compensation $\eta_t$ and reconsider
	next period. The worker aims to find an optimal decision rule that yields
	maximum lifetime rewards. A typical formulation for the wage and
	compensation processes is that both are functions of an 
	exogenous Markov process $\{s_t\}$, with $w_t =
	f(s_t, \epsilon_t)$ and $\eta_t = \ell (s_t, \xi_t)$ where $f$ and $\ell$ are
	nonnegative continuous functions and $\{\epsilon_t\}$ and $\{\xi_t\}$ are
	independent, {\sc iid} innovation processes (see, e.g., \cite{low2010wage}, \cite{bagger2014tenure}).  
	The state space for the job search
	problem is typically set to $(w_t, \eta_t, s_t)$, where $s_t$ is included
	because it can be used to forecast future draws of $w_t$ and $\eta_t$.
	The conditional independence assumption in \eqref{eq:cin} holds if we take
	$y_t = (w_t, \eta_t)$ and $z_t = s_t$.  
\end{example}

Let us now compare the time complexity of VFI and RVFI, based on iterating $T$ and $S$
respectively.

\subsubsection{Finite State Case} 
\label{ss:DSS}

Let $\YY = \times^\ell_{i=1} \YY^i$ and $\ZZ = \times^k_{j=1} \ZZ^j$, where
$\YY^i$ and $\ZZ^j$ are subsets of $\RR$. Each $\YY^i$ (resp., $\ZZ^j$) is
represented by a grid of $L_i$ (resp., $K_j$) points. Integration operations
in both VFI and RVFI are replaced by summations. We use $\hat{P}$ and $\hat{F}$ 
to denote the probability transition matrices for VFI and RVFI respectively.

Let $L := \Pi_{i=1}^{\ell} L_i$ and $K:= \Pi_{j=1}^{k} K_j$ with $L = 1$ for
$\ell =0$. Let $k>0$. There are $LK$ grid points on $\XX = \YY \times \ZZ$ and
$K$ grid points on $\ZZ$.  The matrix $\hat{P}$ is $(LK) \times (LK)$ and
$\hat{F}$ is $K \times (LK)$. VFI and RVFI are implemented by the operators
$\hat{T}$ and $\hat{S}$ defined respectively by
\begin{equation*}
\hat{T} \vec{v} := \vec{r} \vee (\vec{c} + \beta \hat{P} \vec{v})
\quad \text{and} \quad
\hat{S} \vec{g}_z :=  \hat F \left[ \vec r \vee \left(\vec c + \beta \vec g \right) \right].
\end{equation*}
Here $\vec{q}$ represents a column vector with $i$-th element 
equal to $q(y_i, z_i)$, where $(y_i, z_i)$ is the $i$-th element of the list 
of grid points on $\YY \times \ZZ$. $\vec{q}_z$ denotes 
the column vector with the $j$-th element equal to $q(z_j)$, where $z_j$ is 
the $j$-th element of the list of grid points on $\ZZ$. The 
vectors $\vec{v}$, $\vec{r}$, $\vec{c}$ and $\vec{g}$ are
$(LK) \times 1$, while $\vec{g}_z$ is $K \times 1$.
Moreover, $\vee$ denotes taking maximum.

\subsubsection{Continuous State Case} 
\label{ss:mc}

We use the same number of grid points as before, but now for
continuous state function approximation rather than discretization. 
In particular, we replace the discrete state summation with Monte Carlo
integration. Assume that the transition law of the state process follows
\begin{equation*}
y_{t+1} = f_1 (z_t, \epsilon_{t+1}), 
\; \quad
z_{t+1} = f_2 (z_t, \epsilon_{t+1}),
\; \quad
\{ \epsilon_t \} \iidsim \Phi.
\end{equation*}
After drawing $N$ Monte Carlo samples $U_1, \cdots, U_{N} \iidsim \Phi$, RVFI and VFI are implemented by 
\begin{equation*}
\hat{S} g(z) := \frac{1}{N} \sum_{i=1}^{N} \max \left\{ 
r \left( f_1(z, U_i), f_2(z, U_i) \right), \,
c \left( f_1(z, U_i), f_2(z, U_i) \right) + \beta \phi \langle g \rangle \left(f_2(z, U_i)\right)
\right\}
\end{equation*}
\begin{equation*}
\text{and} \quad
\hat{T} v(y, z) := \max \left\{ 
r(y, z), \,
c(y, z) + \beta \frac{1}{N} \sum_{i=1}^{N} \psi \langle v \rangle
\left( f_1(z, U_i), f_2 (z, U_i) \right)
\right\}.
\end{equation*}
Here $g = \{ g(z) \}$ with $z$ in the set of grid points on 
$\ZZ$, and $v = \{ v(y,z)\}$ with $(y,z)$ in the set of grid points on 
$\YY \times \ZZ$. Moreover, $\phi \langle \cdot \rangle$ and 
$\psi \langle \cdot \rangle$ are interpolating functions 
for RVFI and VFI respectively. For example, $\phi \langle g \rangle (z)$
interpolates the vector $g$ to obtain a function $\phi \langle g \rangle$ 
and then evaluates it at $z$.

\subsubsection{Time Complexity}
\label{ss:time_cmplx}

Table \ref{tb:tm_cmplx} provides the time complexity of RVFI and VFI, estimated by counting the number of floating point operations. Each such operation is assumed to have complexity $\mathcal{O}(1)$,
so is function evaluation associated with the model primitives. Moreover, for continuous state case, binary search is used when we evaluate the interpolating function at a given point, and our results hold for linear, quadratic, cubic, and $k$-nearest neighbors interpolations.

\begin{table}
	\caption{Time complexity: VFI v.s RVFI}
	\vspace{-0.35cm}
	\label{tb:tm_cmplx}
	\noindent 
	\begin{center}
		{\small
			\begin{tabular}{|c|c|c|c|c|}
				\hline 
				Complexity & VFI: $1$-loop & RVFI: $1$-loop & VFI: $n$-loop & RVFI: $n$-loop\tabularnewline
				\hline 
				\hline 
				FS & $\mathcal{O}(L^{2}K^{2})$ & $\mathcal{O}(L K^{2})$ & $\mathcal{O}(nL^{2}K^{2})$ & $\mathcal{O}(nL K^{2})$	
				\tabularnewline
				\hline 
				CS & $\mathcal{O}(N L K \log(LK))$ & $\mathcal{O}(N K \log(K))$ & $\mathcal{O}(n N L K \log(LK))$ & $\mathcal{O}(n N K \log(K))$	
				\tabularnewline
				\hline
			\end{tabular} 
		}
		\par\end{center}
\end{table}
For both finite state (FS) and continuous state (CS) cases, 
RVFI provides better performance than VFI. For FS, RVFI is
more efficient than VFI by order $\mathcal{O}(L)$, while for CS, RVFI is so 
by order $\mathcal{O} \left(L \log(LK) / \log(K) \right)$.
For example, if we have $100$ grid points in each dimension, in the FS case,
evaluating a given number of loops will take around $100^{\ell}$ times longer
via VFI than via RVFI, after adjusting for order approximations.
See the Appendix (Section~\ref{ss:pf_cmpl}) for proof of Table \ref{tb:tm_cmplx} results.

\subsection{Robust Control}

\label{ss:robust_control}

In the robust control problem studied by \cite{bidder2012robust} 
(see \cite{hansen2008robustness} for systematic discussion),
an agent is endowed with a benchmark model (of the system transition probabilities) but fears that it is misspecified.
While making decisions, she considers alternative distributions that are
distorted versions of the distribution implied by the benchmark, and balances
the cost that an implicit misspecification would cause against the
plausibility of misspecification. The state-action aggregator is given by
\begin{align*}
H(s, \epsilon, u, v) = 
r(s, \epsilon, u) - \beta \theta \log 
\left[ \EE_{s',\epsilon'|s} \exp 
\left(- \frac{v(s', \epsilon')}{\theta} \right)
\right]    
\quad \text{with} \quad 
s' = f(s, u, \epsilon'),
\end{align*}
where $s_t$ is an endogenous state component and $u_t$ is a vector of
controls.  The full state is $x_t = (s_t, \epsilon_t)$, where $\{
\epsilon_t\}$ is an {\sc iid} innovation process.  Rewards at time $t$ are
$r(s_t, \epsilon_t, u_t)$.  Let $\theta>0$ be the penalty parameter that
controls the degree of robustness. The Bellman operator is
\begin{equation*}
\label{eq:rbstctrol_be0}
T v (s, \epsilon) = \max_{u}
\left\{ 
r(s, \epsilon, u) - 
\beta \theta \log 
\left[ 
\EE_{s',\epsilon'|s} \exp
\left(
- \frac{v(s', \epsilon')}{\theta}
\right)
\right]
\right\}.
\end{equation*}
%
Consider the plan factorization $(W_0, W_1)$ defined by
\begin{equation*}
W_0 v(s) := - \theta \log 
\left\{
\EE_{s', \epsilon' \mid s} 
\exp 
\left[ 
- \frac{v(s', \epsilon')}{\theta}
\right]
\right\}
\quad \text{and} \quad
W_1 g(s, \epsilon, u) := r(s, \epsilon, u) + \beta g(s).
\end{equation*}
%
The refactored Bellman operator $S = W_0 \, M \, W_1$ is then
\begin{equation*}
\label{eq:rbstctrol_be1}
S g(s) = - \theta \log 
\left\{
\EE_{s', \epsilon'|s} \exp     
\left[
- \frac{ \max_{u'} 
	\left\{ 
	r(s', \epsilon', u') + 
	\beta g(s')
	\right\} }{\theta}
\right]
\right\}.
\end{equation*}
The maps $W_0$ and $W_1$ are monotone in the standard pointwise ordering for
real valued functions, and the other assumptions of
Sections~\ref{s:gf}--\ref{s:opt} hold in the setting of \cite{bidder2012robust}, 
so the conclusions of Theorems~\ref{t:asc}--\ref{t:hpi_conv} hold. 
The details are omitted.
While the value function acts on both $s$ and $\epsilon$, the refactored value
function acts only on $s$. Hence, the refactored Bellman operator mitigates
the curse of dimensionality, similar to the outcomes in Sections~\ref{ss:cb}
and \ref{ss:os}.

\section{Conclusion}

This paper presents a systematic treatment of the technique of
transforming Bellman equations associated with discrete time dynamic
programs in order to convert them into more advantageous forms.  We formalized these
transformations in terms of what we called plan factorizations and showed how
this concept encompasses and extends existing transformations from the literature.  We
provided conditions related to monotonicity under which the transformations
preserve the most valuable property of Bellman equations: their link to
optimal value functions and optimal decisions.

The applications presented here focused on how transforming the Bellman equation can
mitigate the curse of dimensionality and thereby boost computational
efficiency.  There are, however, other reasons to consider transformations of
the Bellman equation.  For example, it might be that a given transformation
leads to smoother value functions. Smoother functions are easier to approximate in high
dimensional spaces.  It also appears that some transformations of a given
Bellman equation can shift the problem from a setting of unbounded value
functions to bounded ones, where characterization of optimality becomes
easier.  While these ideas go beyond the scope of the current study, the
theory of plan factorizations presented here should serve as a solid
foundation for new work along these lines.

\section{Appendix}

The appendix provides all remaining proofs.

\subsection{Preliminaries}
\label{ss:aprel}

Let $E_i$ be a nonempty set and let $\tau_i$ be a mapping from
$E_i$ to $E_{i+1}$ for $i = 0, 1, 2$ with addition modulo 3 (a convention we adopt throughout this section). Consider the self-mappings 
\begin{equation*}
F_0 \, := \, \tau_2 \; \tau_1 \; \tau_0,
\quad \;
F_1 \, := \, \tau_0 \; \tau_2 \; \tau_1
\quad \; \text{and} \; \quad
F_2 \, := \, \tau_1 \; \tau_0 \; \tau_2 
\end{equation*}
on $E_0$, $E_1$ and $E_2$ respectively.  We then have
\begin{equation}
\label{eq:oir}
F_{i+1} \, \tau_i \, = \, \tau_i \, F_i \quad \text{on $E_i$ for $i = 0, 1, 2$}.
\end{equation}

\begin{lemma}
	\label{l:mof}
	For each $i = 0, 1, 2$, 
	\begin{enumerate}
		\item if $\varrho$ is a fixed point of $F_i$ in $E_i$, then $\tau_i \, \varrho$ is a
		fixed point of $F_{i+1}$ in $E_{i+1}$.
		\item $F_{i+1}^n \, \tau_i \,=\, \tau_i \, F_i^n$ on $E_i$ for all $n \in \NN$.
	\end{enumerate}
\end{lemma}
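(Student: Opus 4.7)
The plan is to derive both parts of Lemma~\ref{l:mof} directly from the intertwining relation \eqref{eq:oir}, which is the only structural ingredient available. Part (1) is an immediate one-line consequence, and part (2) is a routine induction on $n$ using the same relation.

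For part (1), suppose $\varrho \in E_i$ satisfies $F_i \varrho = \varrho$. Applying $\tau_i$ to both sides and using \eqref{eq:oir} gives
\begin{equation*}
\tau_i \varrho \;=\; \tau_i (F_i \varrho) \;=\; (\tau_i F_i) \varrho \;=\; (F_{i+1} \tau_i) \varrho \;=\; F_{i+1}(\tau_i \varrho),
\end{equation*}
so $\tau_i \varrho$ is a fixed point of $F_{i+1}$ in $E_{i+1}$, as required.

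For part (2), I would proceed by induction on $n$. The base case $n=1$ is exactly \eqref{eq:oir}. For the inductive step, assume $F_{i+1}^n \tau_i = \tau_i F_i^n$ on $E_i$. Then, composing with $F_{i+1}$ on the left and reapplying \eqref{eq:oir},
\begin{equation*}
F_{i+1}^{n+1} \tau_i \;=\; F_{i+1} (F_{i+1}^n \tau_i) \;=\; F_{i+1} (\tau_i F_i^n) \;=\; (F_{i+1} \tau_i) F_i^n \;=\; (\tau_i F_i) F_i^n \;=\; \tau_i F_i^{n+1},
\end{equation*}
which closes the induction. I do not anticipate any real obstacle here: the only thing to be careful about is that \eqref{eq:oir} is an equality of maps on $E_i$ (not on all of $E_{i+1}$), so each composition above must have its domain tracked—but since $F_i^n$ maps $E_i$ to $E_i$ and $\tau_i$ maps $E_i$ to $E_{i+1}$, the domains line up automatically at every step.
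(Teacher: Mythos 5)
Your proposal is correct and is essentially the paper's own argument: part (1) by applying $\tau_i$ to the fixed-point identity and invoking \eqref{eq:oir}, and part (2) by induction on $n$ using \eqref{eq:oir}, with only a trivial difference in whether the relation is peeled off at the front or the back of the composition. The domain bookkeeping you note is handled the same way in the paper.
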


\begin{proof}
	Regarding part (a), if $\varrho$ is a fixed point of $F_i$ in $E_i$, then
	\eqref{eq:oir} yields $F_{i+1} \tau_i \, \varrho
	=  \tau_i \, F_{i} \varrho = \tau_i \, \varrho$, so $\tau_i \, \varrho$ is a fixed point of $F_{i+1}$.
	Regarding part (b), fix $i$ in $\{0, 1, 2\}$.
	By \eqref{eq:oir}, the statement in (b) is true at $n=1$. Let it also be true at $n-1$. Then, using \eqref{eq:oir} again, 
	$ F_{i+1}^n \, \tau_i 
	\,=\, F_{i+1}^{n-1} \, F_{i+1} \, \tau_i 
	\,=\, F_{i+1}^{n-1} \, \tau_i \, F_i
	\,=\, \tau_i \, F_{i}^{n-1} \, F_i
	\,=\, \tau_i \, F_{i}^n$.
	We conclude that (b) holds at every $n \in \NN$. 
\end{proof}

\begin{lemma}
	\label{l:exf}
	If $F_i$ has a unique fixed point $\varrho_i$ in $E_i$ for some $i$ in $\{0,
	1, 2\}$, then $\tau_i \, \varrho_i$ is the unique fixed point of $F_{i+1}$ in $E_{i+1}$.
\end{lemma}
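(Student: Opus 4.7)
The plan is to combine the existence part (a) of Lemma~\ref{l:mof} with a cyclic application of that same lemma to pin down any fixed point of $F_{i+1}$. Throughout, all indices are taken modulo~$3$, matching the convention established at the start of the subsection.

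First I would handle existence. Given that $\varrho_i$ is a fixed point of $F_i$, Lemma~\ref{l:mof}(a) applied once shows that $\tau_i \, \varrho_i$ is a fixed point of $F_{i+1}$ in $E_{i+1}$. This is immediate and requires no further input.

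Next I would handle uniqueness, which is the substantive part (though not really an obstacle). Let $\xi \in E_{i+1}$ be any fixed point of $F_{i+1}$. Applying Lemma~\ref{l:mof}(a) to $\xi$ gives that $\tau_{i+1} \, \xi \in E_{i+2}$ is a fixed point of $F_{i+2}$. Applying Lemma~\ref{l:mof}(a) once more gives that $\tau_{i+2} \, \tau_{i+1} \, \xi \in E_{i+3} = E_i$ is a fixed point of $F_{i+3} = F_i$. By the uniqueness assumption on $F_i$, this forces
\begin{equation*}
\tau_{i+2} \, \tau_{i+1} \, \xi \,=\, \varrho_i.
\end{equation*}
Applying $\tau_i$ to both sides yields $\tau_i \, \tau_{i+2} \, \tau_{i+1} \, \xi = \tau_i \, \varrho_i$. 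Reading off the definitions of $F_0, F_1, F_2$ shows that $\tau_i \, \tau_{i+2} \, \tau_{i+1} = F_{i+1}$ (e.g., $F_1 = \tau_0 \, \tau_2 \, \tau_1$ when $i=0$), so the left-hand side equals $F_{i+1} \, \xi = \xi$. Hence $\xi = \tau_i \, \varrho_i$, completing the uniqueness argument.

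The only delicate point is the modular arithmetic on indices, since one must verify that $\tau_i \, \tau_{i+2} \, \tau_{i+1}$ really does coincide with $F_{i+1}$ for each $i \in \{0,1,2\}$; this is a direct check of the three cases against the definitions of $F_0, F_1, F_2$ given above. Once this bookkeeping is in place the proof closes cleanly with no additional estimates or topological input required.
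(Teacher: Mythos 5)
Your proposal is correct and follows essentially the same route as the paper: existence via Lemma~\ref{l:mof}(a), then uniqueness by applying Lemma~\ref{l:mof}(a) twice to reach a fixed point of $F_i$, invoking uniqueness there, and applying $\tau_i$ together with the identity $\tau_i\,\tau_{i+2}\,\tau_{i+1}=F_{i+1}$. The only cosmetic difference is that you identify an arbitrary fixed point $\xi$ of $F_{i+1}$ directly with $\tau_i\,\varrho_i$, whereas the paper shows any two fixed points of $F_{i+1}$ coincide; the substance is identical.
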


\begin{proof}
	We have already proved all but uniqueness.  To see that uniqueness holds,
	fix $i \in \{0, 1, 2 \}$ and
	suppose that $F_i$ has only one fixed point in $E_i$, whereas $F_{i+1}$
	has two fixed points in $E_{i+1}$.
	Denote the fixed points of $F_{i+1}$ by $\varrho$ and $f$.  Applying part (a) of
	Lemma~\ref{l:mof} twice, we see that $\tau_{i+2} \, \tau_{i+1} \, \varrho$ and $\tau_{i+2} \, \tau_{i+1} \, f$
	are both fixed points of $F_{i+3} = F_i$.  Since $F_i$ has only one fixed point, 
	$\tau_{i+2} \, \tau_{i+1} \, \varrho = \tau_{i+2} \, \tau_{i+1} \, f$.
	Applying $\tau_i$ to both sides of the last equality gives $F_{i+1} \varrho = F_{i+1}
	f$.  Since $\varrho$ and $f$ are both fixed points of $F_{i+1}$, we conclude that
	$\varrho = f$ and the fixed point is unique. 
\end{proof}

\subsection{Proof of Sections~\ref{s:cv_approach}--\ref{s:opt} Results}
\label{ss:pf_main}

When connecting to the results in Section~\ref{ss:aprel}, we always take $\tau_0 = W_0$ and $\tau_1 = W_1$. The map $\tau_2$ and the candidate spaces $E_0$, $E_1$ and $E_2$ will vary depending on the context.

\begin{proof}[Proof of Proposition \ref{p:iter}.]
	\hypertarget{p:iter}
	The claims are immediate from Lemma \ref{l:mof}. Regarding iterations on $S$ and $T$, we set $E_0 = \VV$, $E_1 = \GG$, $E_2 = \HH$ and $\tau_2 = M$. Regarding iterations on $S_{\sigma}$ and $T_{\sigma}$, we fix $\sigma \in \Sigma$ and set $E_0 = \vV$, $E_1 = \gG$, $E_2 = \hH$ and $\tau_2 = M_{\sigma}$. 
\end{proof}

\begin{proof}[Proof of Proposition~\ref{p:exu}.]
	\hypertarget{p:exu}
	The claims are immediate by Lemmas~\ref{l:mof}--\ref{l:exf} (let $E_0 = \VV$, $E_1 = \GG$, $E_2 = \HH$ and $\tau_2 = M$).
\end{proof}

\begin{proof}[Proof of Proposition~\ref{p:vsigi}.]
	\hypertarget{p:vsigi}
	Similar to the proof of Proposition~\ref{p:exu}, this result is immediate
	from Lemmas~\ref{l:mof}--\ref{l:exf} once we set $E_0 = \vV$, $E_1 = \gG$, $E_2 = \hH$ and $\tau_2 = M_{\sigma}$.
\end{proof}

\begin{proof}[Proof of Theorem~\ref{t:bpowm}.]
	\hypertarget{t:bpowm}
	Now suppose that (a) of Theorem~\ref{t:bpowm} holds, so that $g^* \in \GG$ and $Sg^* = g^*$.  We claim that (b) holds, i.e., $v^* \in \VV$ and $T v^* = v^*$. Based on Lemma~\ref{l:mof} (in particular, let $E_0 = \VV$, $E_1 = \GG$, $E_2 = \HH$ and $\tau_2 = M$), we only need to show that $v^* = M W_1 g^*$. 
	
	Pick any $\sigma \in \Sigma$. We have
	\begin{equation*}
	v_\sigma = M_\sigma W_1 g_\sigma 
	\leq M W_1 g_\sigma \leq M W_1 g^*,
	\end{equation*}
	where the equality is due to Assumption~\ref{a:ess} and Proposition~\ref{p:vsigi}, the first inequality is due to the fact that $M$ pointwise dominates $M_\sigma$, and the second follows from the definition of $g^*$ and the monotonicity of $M$ and $W_1$. As $\sigma$ was arbitrary, this proves that $v^* \leq M W_1 g^*$. 
	
	Regarding the reverse inequality, from $g^* = S g^* \in \GG$, there exists a $v \in \VV$ such that $g^* = W_0 v$. Assumption~\ref{a:maxex} then implies the existence of a $\sigma$ such that $M W_1 g^* = M W_1 W_0 v = M_\sigma W_1 W_0 v = M_\sigma W_1 g^*$. Hence, $S g^* = W_0 M W_1 g^* = W_0 M_\sigma W_1 g^* = S_\sigma g^*$. We see that $g^*$ is a fixed point of $S_\sigma$ in $\gG$. Since  $g_\sigma$ is the unique fixed point of $S_\sigma$ in $\gG$ by Assumption~\ref{a:ess} and Proposition~\ref{p:vsigi}, we have $g^* = g_\sigma$. As a result, $M W_1 g^* = M_\sigma W_1 g_\sigma = v_\sigma \leq v^*$. We have now shown that (b) holds. 
	
	The claim that (b) implies (a) is similar. In particular, based on Lemma~\ref{l:mof}, it suffices to show that $g^* = W_0 v^*$. Pick any $\sigma \in \Sigma$. Since $g_\sigma = W_0 v_\sigma \leq W_0 v^*$ by Assumption~\ref{a:ess}, Proposition~\ref{p:vsigi} and the monotonicity of $W_0$, it follows that $g^* \leq W_0 v^*$. Moreover, since $v^* \in \VV$, Assumption~\ref{a:maxex} implies the existence of a $\sigma$ such that $T v^* = T_\sigma v^*$. Since in addition $v^* = T v^*$, it follows that $v^*$ is a fixed point of $T_\sigma$ and thus $v^* = v_\sigma$ by Assumption~\ref{a:ess}. We then have $W_0 v^* = W_0 v_\sigma = g_\sigma \leq g^*$. In summary, we have $g^* = W_0 v^*$ and claim (a) is now verified. 
	
	The claim that $g^* = \hat g$ under (a)--(b) translates to the claim that $g^* = W_0 v^*$, which we have already shown in the previous step.
	
	The claim in Theorem~\ref{t:bpowm} that at least one optimal policy exists
	under either (a) or (b) follows from the equivalence of (a) and (b) just
	established combined with Theorem~\ref{t:bpost}.
	
	Finally, suppose that (a) holds and consider the last claim in
	Theorem~\ref{t:bpowm}, which can be expressed succinctly as
	\begin{equation*}
	\label{eq:ggio}
	v_\sigma = v^* 
	\iff T_\sigma v^* = T v^*
	\iff M_\sigma W_1 g^* = M W_1 g^*.
	\end{equation*}
	Regarding the first equivalence, if $v_\sigma = v^*$, then $T_\sigma v^* = T_\sigma v_\sigma = v_\sigma = v^* = T v^*$. Conversely, if $T_\sigma v^* = T v^*$, then, since $v^* = T v^*$ and, by Assumption~\ref{a:ess}, $v_\sigma$ is the only fixed point of $T_\sigma$ in $\vV$, we have $v_\sigma = v^*$. The second equivalence follows, since, by the relations $g_\sigma = W_0 v_\sigma$ and $g^* = W_0 v^*$, we have: 
	$T_\sigma v^* = M_\sigma W_1 W_0 v^* = M_\sigma W_1 g^*$ and
	$T v^* = M W_1 W_0 v^* = M W_1 g^*$. The last claim in Theorem~\ref{t:bpowm} is now established. This concludes the proof.
\end{proof}

\begin{proof}[Proof of Theorem~\ref{t:asc}.]
	\hypertarget{t:asc}
	We need only show that (a) holds, since the remaining claims follow directly from the hypotheses of the theorem, claim~(a), the definition of asymptotic stability (for part (b)) and Theorem~\ref{t:bpowm} (for parts (c) and (d)).
	
	To see that (a) holds, note that, by the stated
	hypotheses, $S$ has a unique fixed point in $\GG$, which we denote
	below by $\bar g$.  Our aim is to show that $\bar g = g^*$.
	
	First observe that, by Assumption~\ref{a:maxex}, there is a $\sigma
	\in \Sigma$ such that $S_\sigma \bar g = S \bar g = \bar g$.  But,
	by Assumption~\ref{a:ess} and Proposition~\ref{p:vsigi},
	$S_\sigma$ has exactly one fixed point, which is the refactored 
	$\sigma$-value function $g_\sigma$.  Hence $\bar g = g_\sigma$.  
	In particular, $\bar g \leq g^*$.
	
	To see that the reverse inequality holds, pick any $\sigma \in \Sigma$ and
	note that, by the definition of $S$, we have $\bar g = S \bar g \geq S_\sigma \bar g$.  We know that $S_\sigma$ is
	monotone on $\GG$, since this operator is the composition of three
	monotone operators ($W_0$ and $W_1$ by assumption and $M_\sigma$
	automatically).  Hence we can iterate on the last inequality to establish
	$\bar g \geq S^k_\sigma \bar g$ for all $k \in \NN$.
	Taking limits and using the fact that the partial order is closed,
	$S_\sigma$ is asymptotically
	stable and, as shown above, $g_\sigma$ is the unique fixed point,
	we have $\bar g \geq g_\sigma$.  Since $\sigma$ was chosen
	arbitrarily, this yields $\bar g \geq g^*$.  Part (a) of
	Theorem~\ref{t:asc} is now verified.
\end{proof}

\begin{proof}[Proof of Proposition~\ref{p:cc}.]
	\hypertarget{p:cc}
	As a closed subset of $b_\kappa \FF$ under the $\| \cdot
	\|_\kappa$-norm metric, the set $\gG$ is complete under the same metric
	and, by \eqref{eq:ucons} and the Banach contraction mapping theorem, each
	$S_\sigma$ is asymptotically stable on $\gG$. Moreover, the pointwise partial
	order $\leq$ is closed under this metric.  Thus, to verify the conditions
	of Theorem~\ref{t:asc}, we need only show that $S$ is asymptotically
	stable on $\GG$ under the same metric.
	To this end, we first claim that, under the stated assumptions,
	\begin{equation}
	\label{eq:smx}
	Sg (x, a) = \sup_{\sigma \in \Sigma} S_\sigma g (x, a)
	\quad \text{for all } (x, a) \in \FF
	\text{ and } g \in \GG.
	\end{equation}
	To see that \eqref{eq:smx} holds, fix $g \in \GG$. Notice that the definition of $M$ implies that $M \, W_1 \, g \geq M_\sigma \, W_1 \, g$ for any $\sigma \in \Sigma$ and hence, applying $W_0$ to both sides and using monotonicity yield $S g \geq S_\sigma g$ for all $\sigma \in \Sigma$. Moreover, by the definition of $\GG$ and Assumption~\ref{a:maxex}, there exists a $g$-greedy policy $\sigma^* \in \Sigma$ such that $M \, W_1 \, g = M_{\sigma^*} \, W_1 \, g$, and applying $W_0$ to both sides again gives $S g = S_{\sigma^*} g$. Hence \eqref{eq:smx} is valid.
	
	Now fix $g, g' \in \GG$ and $(x, a) \in \FF$.  By \eqref{eq:smx} and the
	contraction condition on $S_\sigma$ in \eqref{eq:ucons}, we have
	\begin{equation*}
	|S g (x, a) - Sg'(x, a)|
	= |\sup_{\sigma \in \Sigma} S_\sigma g (x, a) 
	- \sup_{\sigma \in \Sigma} S_\sigma g'(x, a)|
	\leq \sup_{\sigma \in \Sigma} 
	| S_\sigma g (x, a) - S_\sigma g'(x, a)|,
	\end{equation*}
	\begin{equation*}
	\therefore \qquad
	\frac{|S g (x, a) - Sg'(x, a)|}{ \kappa(x, a)}
	\leq \sup_{\sigma \in \Sigma} \| S_\sigma g - S_\sigma g' \|_\kappa
	\leq \alpha \| g -  g' \|_\kappa.
	\end{equation*}
	Taking supremum gives $\| S g - S g' \|_\kappa \leq \alpha \,  \| g - g' \|_\kappa$, so $S$ is a contraction on $\GG$ of modulus $\alpha$. Since in addition $\GG$ is a closed subset of $b_\kappa \FF$ under the $\|\cdot\|_\kappa$-norm metric, it is a Banach space under the same norm. The Banach contraction mapping theorem then implies that $S$ is asymptotically stable on $\GG$. Since all the conditions of Theorem~\ref{t:asc} are verified, its conclusions follow. 
\end{proof}

\begin{proof}[Proof of Theorem~\ref{t:mpi}.]
	\hypertarget{t:mpi}
	Without loss of generality, we assume that $m_k \equiv m$ for all 
	$k \in \NN_0$. When $k = 0$, by definition, 
	\begin{align*}
	\Sigma_{0}^v = 
	\left\{
	\sigma \in \Sigma \colon 
	M_{\sigma} W_1 W_0 v_{0} = M W_1 W_0 v_{0}
	\right\}    
	= 
	\left\{ 
	\sigma \in \Sigma \colon
	M_{\sigma} W_1 g_{0} = M W_1 g_{0}
	\right\}
	= \Sigma_{0}^g,    
	\end{align*}
	i.e., the sets of $v_{0}$-greedy and $g_0$-greedy policies are identical. This in turn implies that a policy $\sigma_0 \in \Sigma$ is $v_{0}$-greedy if and only if it is 
	$g_{0}$-greedy. Moreover, by Proposition \ref{p:iter}, 
	\begin{align*}
	g_{1}
	&= S_{\sigma_{0}}^{m} g_{0}
	= W_0 T_{\sigma_{0}}^{m - 1} M_{\sigma_0} W_1 g_{0}    \\
	&= W_0 T_{\sigma_{0}}^{m - 1} M_{\sigma_0} W_1 W_0 v_{0}    
	= W_0 T_{\sigma_{0}}^{m - 1} T_{\sigma_{0}} v_{0}
	= W_0 T_{\sigma_{0}}^{m} v_{0}
	= W_0 v_{1}.
	\end{align*}
	The related claims are verified for $k=0$. Suppose these 
	claims hold for arbitrary $k$. It remains to show that they hold for 
	$k+1$. By the induction hypothesis, $\Sigma_k^v = \Sigma_k^g$, 
	a policy $\sigma_k \in \Sigma$ is $v_{k}$-greedy if and only if it is 
	$g_{k}$-greedy, and $g_{k+1} = W_0 v_{k+1}$.
	By definition of the greedy policy,
	\begin{align*}
	\Sigma_{k+1}^v &= 
	\left\{ 
	\sigma \in \Sigma \colon
	M_{\sigma} W_1 W_0 v_{k+1}
	= M W_1 W_0 v_{k+1}
	\right\}    \\
	&= 
	\left\{
	\sigma \in \Sigma \colon
	M_{\sigma} W_1 g_{k+1}
	= M W_1 g_{k+1}
	\right\}
	= \Sigma_{k+1}^g,  
	\end{align*}
	i.e., a policy $\sigma_{k+1} \in \Sigma$ is $v_{k+1}$-greedy if and only if it is $g_{k+1}$-greedy. Moreover, by Proposition \ref{p:iter},
	\begin{align*}
	g_{k+2}
	&= S_{\sigma_{k+1}}^{m} g_{k+1}
	= W_0 T_{\sigma_{k+1}}^{m - 1} 
	M_{\sigma_{k+1}} W_1 g_{k+1}    \\
	&= W_0 T_{\sigma_{k+1}}^{m - 1} M_{\sigma_{k+1}} 
	W_1 W_0 v_{k+1}    
	= W_0 T_{\sigma_{k+1}}^{m} v_{k+1}
	= W_0 v_{k+2}.
	\end{align*}
	Hence, the related claims of the proposition hold for $k+1$, completing the
	proof.
\end{proof}

\begin{proof}[Proof of Theorem \ref{t:hpi_conv}.]
	\hypertarget{t:hpi_conv}
	Without loss of generality, we assume $m_k \equiv m$ for all $k \in \NN_0$. Let $\{ \zeta_k \}$ be defined by $\zeta_0 := g_{0}$ and $\zeta_k := S \, \zeta_{k-1}$. We show by induction that
	\begin{align}
	\label{eq:ineq_mpi}
	\zeta_k \leq g_{k} \leq g^*
	\quad \text{and} \quad
	g_{k} \leq S g_{k}
	\quad \text{for all } \, k \in \NN_0.
	\end{align} 
	Note that $g_{0} \leq S g_{0}$ by assumption. Since $S$ is monotone, this implies that $g_{0} \leq S^t g_{0}$ for all $t \in \NN_0$. Letting $t \to \infty$, Theorem \ref{t:asc} implies that $g_{0} \leq g^*$.
	Since in addition  $\zeta_0 = g_{0}$, \eqref{eq:ineq_mpi} is 
	satisfied for $k=0$. Suppose this claim holds for arbitrary $k \in \NN_0$. 
	Then, since $S$ and $S_{\sigma_{k}}$ are monotone, 
	\begin{align*}
	g_{k+1}
	= S_{\sigma_{k}}^{m} \, g_{k} 
	\leq S S_{\sigma_{k}}^{m-1} g_{k}
	\leq S S_{\sigma_{k}}^{m-1} S g_{k}
	= S S_{\sigma_{k}}^{m} \, g_{k}
	= S g_{k+1},
	\end{align*}
	where the first inequality holds by the definition of $S$, the second 
	inequality holds since $g_{k} \leq S g_{k}$ (the induction hypothesis), and the second equality is due to $\sigma_{k} \in \Sigma_k^g$. Hence, 
	$g_{k+1} \leq S^t g_{k+1}$ for all $t \in \NN_0$. Letting $t \to \infty$ yields $g_{k+1} \leq g^*$.
	
	Similarly, since 
	$g_{k} \leq S g_{k} = S_{\sigma_{k}} g_{k}$, we have
	$g_{k} \leq S_{\sigma_{k}}^{m-1} g_{k}$ and thus
	\begin{align*}
	\zeta_{k+1} = S \zeta_k \leq S g_{k}
	= S_{\sigma_{k}} g_{k}
	\leq S_{\sigma_{k}}^m g_{k} = g_{k+1}.
	\end{align*}
	Hence, \eqref{eq:ineq_mpi} holds by induction. The monotonicity of 
	$S_{\sigma_{k}}$ implies that
	%
	%
	$g_{k}
	\leq S g_{k} 
	= S_{\sigma_{k}} g_{k}
	\leq \cdots 
	\leq S_{\sigma_{k}}^m g_{k}
	= g_{k+1}$
	for all $k \in \NN_0$. Hence, $\{ g_{k}\}$ is increasing in $k$. 
	Moreover, since $\zeta_k \to g^*$ by Theorem \ref{t:asc}, \eqref{eq:ineq_mpi} implies that $g_{k} \to g^*$ as $k \to \infty$.
\end{proof}

\subsection{Proof of Section~\ref{ss:os} Results}
\label{ss:pf_cmpl}

To prove the results of Table \ref{tb:tm_cmplx}, recall that floating point operations are any elementary actions (e.g., $+$, $\times$, $\max$, $\min$) on or assignments with floating point numbers. If $f$ and $h$ are scalar functions on $\RR^n$, we write $f(x)= \mathcal{O}(h(x))$ whenever there exist $C, M>0$ such that $\| x \| \geq M$ implies $|f(x)| \leq C |h(x)|$, where $\|\cdot \|$ is the sup norm. 
We introduce some elementary facts on time complexity:
\begin{enumerate}
	\item[(a)] The multiplication of an $m \times n$ matrix 
	and an $n \times p$ matrix has complexity $\mathcal{O}(mnp)$. 
	(See, e.g., Section~2.5.4 of \cite{skiena2008algorithm}.)
	
	\item[(b)] The  
	binary search algorithm finds the index of an element in a given 
	sorted array of length $n$ in $\mathcal{O}(\log(n))$ time. 
	(See, e.g., Section~4.9 of \cite{skiena2008algorithm}.)
\end{enumerate}

For the finite state (FS) case, time complexity of $1$-loop VFI reduces to
the complexity of matrix multiplication $\hat{P} \vec{v}$, which is of order 
$\mathcal{O}(L^2 K^2)$ based on the shape of $\hat{P}$ and $\vec{v}$ and 
fact~(a) above. Similarly, time complexity of $1$-loop RVFI has complexity $\mathcal{O}(L K^2)$. 
Time complexity of $n$-loop algorithms is scaled up by $\mathcal{O}(n)$. 

For the continuous state (CS) case, let $\mathcal{O}(\phi)$ and 
$\mathcal{O}(\psi)$ denote respectively the complexity of single point evaluation of 
the interpolating functions $\phi$ and $\psi$. Counting the 
floating point operations associated with all grid points inside the 
inner loops shows that the one step complexities 
of VFI and RVFI are $\mathcal{O}(NLK) \mathcal{O}(\psi)$ and
$\mathcal{O}(NK) \mathcal{O}(\phi)$, respectively. 
Since binary search function evaluation is adopted for the indicated 
function interpolation mechanisms, and in particular, since evaluating 
$\psi$ at a given point uses binary search $\ell + k$ times, based on fact~(b) above, we have
\begin{align*}
\mathcal{O}(\psi) 
&= \mathcal{O} \left( \sum_{i=1}^{\ell} \log(L_i) +
\sum_{j=1}^{k} \log(K_j) 
\right) \\
&= \mathcal{O} \left( \log \left( \Pi_{i=1}^\ell L_i \right) 
+ \log \left( \Pi_{j=1}^k K_j \right)
\right)
= \mathcal{O}(\log(LK)).
\end{align*}
Similarly, we can show that $\mathcal{O}(\phi) = \mathcal{O}(\log(K))$. Combining 
these results, we see that the claims of the CS case hold, concluding our proof
of Table \ref{tb:tm_cmplx} results.

\subsection{Counterexamples}
\label{ss:counter}

In this section, we provide counterexamples showing that monotonicity of $W_0$
and $W_1$ cannot be dropped from Theorem~\ref{t:bpowm}. Recall that $\EE_{y \mid x}$ denotes 
the expectation of $y$ conditional on $x$.

\subsubsection{Counterexample 1}
\label{sss:ce1}

Here we exhibit a dynamic program and value transformation under which $Tv^* = v^*$, $S g^* \not= g^*$ and $g^* \neq \hat g$. The example involves risk sensitive preferences.

Let $\XX := \{ 1, 2 \}$, $\AA := \{ 0, 1\}$, $\vV = \VV := \RR^\XX$ and $\Sigma :=
\{ \sigma_1, \sigma_2, \sigma_3, \sigma_4\}$, where the policy functions are
%
\begin{center}
	{\small
		\begin{tabular}{|c|c|c|c|c|}
			\hline 
			$\;\;x\;\;$ & $\sigma_{1}(x)$ & $\sigma_{2}(x)$ & $\sigma_{3}(x)$ & $\sigma_{4}(x)$\tabularnewline
			\hline 
			\hline 
			$1$ & $0$ & $0$ & $1$ & $1$\tabularnewline
			\hline 
			$2$ & $0$ & $1$ & $1$ & $0$\tabularnewline
			\hline 
		\end{tabular}
	}  
	\par\end{center}
%
In state $x$, choosing action $a$ gives the agent an immediate reward
$x - a$. If $a = 0$, then the next period state $x'=1$, while if $a=1$, the
next period state $x' = 2$. Let the state-action aggregator $H$ take the
risk-sensitive form
\begin{equation*}
H(x,a,v) = x - a - \frac{\beta}{\gamma} \log \EE_{x'|a} \exp[-\gamma v(x')].
\end{equation*} 
Then the Bellman operator $T$ is 
\begin{align*}
T v (x) 
= \max_{a \in \{ 0,1\}}
\left\{ 
x - a - \frac{\beta}{\gamma} \log \EE_{x'|a} \exp [- \gamma v(x')]
\right\}    
= \max \left\{ x + \beta v(1), \, x - 1 + \beta v(2) \right\}.
\end{align*}
Consider the plan factorization $(W_0, W_1)$ given by
\begin{equation*}
W_0 \, v (a) := \EE_{x'|a} \exp [- \gamma v(x')] 
\quad \text{and} \quad
W_1 \,  g(x, a) := x - a - \frac{\beta}{\gamma} \log g(a).
\end{equation*}
The refactored Bellman operator is then
\begin{equation*}
S g(a) = \EE_{x'|a} \exp 
\left( - \gamma
\max_{a' \in \{ 0,1 \}}
\left\{
x' - a' - \frac{\beta}{\gamma} \log g(a')
\right\}
\right).
\end{equation*}
Note that neither $W_1$ nor $W_0$ is monotone. In the following, we assume that
$\beta \in (0,1)$.

\begin{lemma}
	\label{lm:s0_sigv_ce1}
	The $\sigma$-value functions are as follows:
	\begin{center}
		{\small
			\begin{tabular}{|c|c|c|c|c|}
				\hline 
				$\;\;x\;\;$ & $v_{\sigma_{1}}(x)$ & $v_{\sigma_{2}}(x)$ & $v_{\sigma_{3}}(x)$ & $v_{\sigma_{4}}(x)$\tabularnewline
				\hline 
				\hline 
				$1$ & $1/(1-\beta)$ & $1/(1-\beta)$ & $\beta/(1-\beta)$ & $2\beta/(1-\beta^{2})$\tabularnewline
				\hline 
				$2$ & $(2-\beta)/(1-\beta)$ & $1/(1-\beta)$ & $1/(1-\beta)$ & $2/(1-\beta^{2})$\tabularnewline
				\hline 
			\end{tabular}
		}
		\par\end{center}
	%
\end{lemma}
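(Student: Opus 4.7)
The plan is to first simplify $H$ by exploiting the deterministic transition structure. Conditional on $a \in \{0,1\}$, the next state $x'$ is a constant (namely $1$ if $a=0$ and $2$ if $a=1$), so $\EE_{x'\mid a}\exp[-\gamma v(x')] = \exp[-\gamma v(x'(a))]$, and consequently
\begin{equation*}
H(x,a,v) \;=\; x - a - \frac{\beta}{\gamma}\log\exp[-\gamma v(x'(a))]
\;=\; x - a + \beta v(x'(a)).
\end{equation*}
In particular, the aggregator is affine in $v$ and independent of $\gamma$, and for any policy $\sigma \in \Sigma$ the $\sigma$-value operator reduces to the linear map $T_\sigma v(x) = x - \sigma(x) + \beta v(x'(\sigma(x)))$ on $\RR^\XX = \RR^2$.

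Next, for each $i = 1,2,3,4$ I would substitute the tabulated $\sigma_i$ and read off a $2 \times 2$ linear system in the unknowns $v(1), v(2)$. For $\sigma_1$ (always choose $a=0$) the system decouples: $v(1) = 1 + \beta v(1)$ and $v(2) = 2 + \beta v(1)$, giving $v_{\sigma_1}(1) = 1/(1-\beta)$ and $v_{\sigma_1}(2) = (2-\beta)/(1-\beta)$. For $\sigma_2$ both coordinates decouple into independent scalar equations, yielding $v_{\sigma_2} \equiv 1/(1-\beta)$. For $\sigma_3$ (always choose $a=1$) the equation for $v(2)$ is self-contained and gives $v_{\sigma_3}(2) = 1/(1-\beta)$, with $v_{\sigma_3}(1) = \beta v_{\sigma_3}(2)$.

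The only mildly non-trivial case is $\sigma_4$, where the two coordinates are genuinely coupled: $v(1) = \beta v(2)$ and $v(2) = 2 + \beta v(1)$. Substituting the first into the second yields $v(2) = 2 + \beta^2 v(2)$, hence $v_{\sigma_4}(2) = 2/(1-\beta^2)$ and $v_{\sigma_4}(1) = 2\beta/(1-\beta^2)$, matching the table. Uniqueness of each fixed point in $\vV = \RR^\XX$ is automatic since, with $\beta \in (0,1)$, every $T_{\sigma_i}$ is an affine contraction of modulus $\beta$ on $(\RR^2, \|\cdot\|_\infty)$, so Assumption~\ref{a:ess} holds and the candidate values computed above are the unique $\sigma_i$-value functions. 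The main obstacle is purely bookkeeping; no genuine analytical difficulty arises, since the risk-sensitive nonlinearity collapses under deterministic transitions.
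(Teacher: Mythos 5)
Your proposal is correct and follows essentially the same route as the paper: since the transition conditional on $a$ is deterministic, the log-exp term collapses to $\beta v(x'(a))$, and each $v_{\sigma_i}$ is obtained by solving the resulting (at most $2\times 2$) linear system, exactly as the paper does for $\sigma_1$ before noting the other cases are similar. Your added remark that each $T_{\sigma_i}$ is a $\beta$-contraction (so the tabulated functions are the unique $\sigma$-value functions under $\beta\in(0,1)$) is a harmless extra that the paper leaves implicit.
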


\begin{proof}
	Regarding $v_{\sigma_1}$, by definition,
	\begin{align*}
	&v_{\sigma_1} (1) 
	= 1 - \sigma_1(1) - \frac{\beta}{\gamma} \log \EE_{x'|\sigma_1(1)} 
	\exp [- \gamma v_{\sigma_1} (x')]
	= 1 + \beta v_{\sigma_1} (1)    \\
	\text{and} \quad
	&v_{\sigma_1} (2)
	= 2 - \sigma_1(2) - \frac{\beta}{\gamma} \log \EE_{x'|\sigma_1(2)} 
	\exp [- \gamma v_{\sigma_1} (x')]
	= 2 + \beta v_{\sigma_1} (1).
	\end{align*}
	Hence, $v_{\sigma_1} (1) = 1 / (1 - \beta)$ and 
	$v_{\sigma_1}(2) = (2- \beta) / (1 - \beta)$. 
	The remaining proofs are similar. 
\end{proof}

Based on Lemma \ref{lm:s0_sigv_ce1}, the value function $v^*$ is given by
\begin{align*}
&v^* (1) = \max_{\sigma_i} v_{\sigma_i} (1) 
= v_{\sigma_1} (1) 
= v_{\sigma_2} (1) 
= 1 / (1 - \beta)    
\\
\text{and} \quad
&v^* (2) = \max_{\sigma_i} v_{\sigma_i} (2) 
= v_{\sigma_1} (2)
= (2 - \beta) / (1 - \beta).
\end{align*}
Moreover, we have $v^* = T v^*$, since
\begin{align*}
T v^*(1) 
&= \max \{ 1 + \beta v^*(1), \, \beta v^*(2) \} 
= 1 / (1 - \beta) = v^*(1)    \\
\text{and} \quad
T v^*(2) 
&= \max
\{ 2 + \beta v^*(1), \, 1 + \beta v^*(2) \} 
= (2 - \beta) / (1 - \beta) = v^* (2).
\end{align*}

\begin{lemma}
	\label{lm:s1_sigv_ce1}
	The refactored $\sigma$-value functions are as follows:
	\begin{center}
		{\small
			\begin{tabular}{|c|c|c|c|c|}
				\hline 
				$\;\;a\;\;$ & $g_{\sigma_{1}}(a)$ & $g_{\sigma_{2}}(a)$ & $g_{\sigma_{3}}(a)$ & $g_{\sigma_{4}}(a)$\tabularnewline
				\hline 
				\hline 
				$0$ & $\exp[-\gamma/(1-\beta)]$ & $\exp[-\gamma/(1-\beta)]$ & $\exp[-\gamma\beta/(1-\beta)]$ & $\exp[-2\gamma\beta/(1-\beta^{2})]$\tabularnewline
				\hline 
				$1$ & $\exp[-\gamma(2-\beta)/(1-\beta)]$ & $\exp[-\gamma/(1-\beta)]$ & $\exp[-\gamma/(1-\beta)]$ & $\exp[-2\gamma/(1-\beta^{2})]$\tabularnewline
				\hline 
			\end{tabular}
		}
		\par\end{center}
\end{lemma}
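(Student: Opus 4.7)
The plan is to avoid solving the refactored fixed point equation $g_\sigma = S_\sigma g_\sigma$ directly and instead exploit Proposition~\ref{p:vsigi}, which tells us that whenever $T_\sigma$ has a unique fixed point $v_\sigma$ in $\vV$, the refactored operator $S_\sigma$ has a unique fixed point in $\gG$ given by the explicit formula $g_\sigma = W_0 \, v_\sigma$. Since Lemma~\ref{lm:s0_sigv_ce1} already supplies $v_{\sigma_i}$ for $i=1,2,3,4$, the entire table will follow from a one-line evaluation of $W_0$ at each of these functions.

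First I would verify the hypothesis of Proposition~\ref{p:vsigi}. In the present setting $\vV = \VV = \RR^\XX$, so every function produced by $T_\sigma$ remains in $\vV$. Uniqueness of $v_\sigma$ on the two-point state space follows from the fact that, under any $\sigma_i$, the transitions are deterministic, so $T_\sigma$ reduces to an affine contraction of modulus $\beta \in (0,1)$ on $\RR^2$, whose unique fixed point is precisely the $v_{\sigma_i}$ computed in Lemma~\ref{lm:s0_sigv_ce1}. Proposition~\ref{p:vsigi} therefore applies and gives $g_{\sigma_i} = W_0 \, v_{\sigma_i}$ as the unique refactored $\sigma_i$-value function in $\gG$.

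Next I would specialize the formula for $W_0$. By the construction of the plan factorization,
\begin{equation*}
W_0 \, v (a) \, = \, \EE_{x'|a} \exp[-\gamma v(x')].
\end{equation*}
Because the transition is deterministic with $x'=1$ when $a=0$ and $x'=2$ when $a=1$, this collapses to
\begin{equation*}
g_\sigma(0) \, = \, \exp[-\gamma v_\sigma(1)]
\quad \text{and} \quad
g_\sigma(1) \, = \, \exp[-\gamma v_\sigma(2)].
\end{equation*}
Substituting the four pairs $(v_{\sigma_i}(1), v_{\sigma_i}(2))$ from Lemma~\ref{lm:s0_sigv_ce1} into these two expressions reproduces each column of the table in the statement, and the computation is routine.

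The only conceptual subtlety, and hence the ``main obstacle,'' is confirming that Proposition~\ref{p:vsigi} is genuinely available here despite the failure of monotonicity of $W_0$ and $W_1$: that proposition relies only on the algebraic factorization $T_\sigma = M_\sigma W_1 W_0$ and uniqueness of $v_\sigma$, neither of which requires monotonicity. Once that point is noted, the proof is a direct computation; alternatively, one could bypass Proposition~\ref{p:vsigi} altogether and verify each entry by plugging the proposed $g_{\sigma_i}$ into $S_{\sigma_i} g = g$, but the route through $g_\sigma = W_0 v_\sigma$ is markedly shorter.
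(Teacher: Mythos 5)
Your proof is correct, but it follows a genuinely different route from the paper's. The paper proves the lemma by direct computation: for each $\sigma_i$ it writes out the fixed-point equation $g_{\sigma_i} = S_{\sigma_i} g_{\sigma_i}$ coordinate by coordinate --- e.g.\ for $\sigma_1$ the deterministic transitions give $g_{\sigma_1}(0) = \exp(-\gamma)\, g_{\sigma_1}(0)^{\beta}$ and $g_{\sigma_1}(1) = \exp(-2\gamma)\, g_{\sigma_1}(0)^{\beta}$ --- and solves these equations directly, never invoking Proposition~\ref{p:vsigi} or the values from Lemma~\ref{lm:s0_sigv_ce1}. You instead verify that each $T_{\sigma_i}$ is an affine contraction of modulus $\beta$ on $\RR^2$ (so its unique fixed point is the $v_{\sigma_i}$ already tabulated) and then read off $g_{\sigma_i} = W_0\, v_{\sigma_i}$ from Proposition~\ref{p:vsigi}; your observation that this proposition rests only on the algebraic cycle structure in Lemmas~\ref{l:mof}--\ref{l:exf}, and so survives the failure of monotonicity of $W_0$ and $W_1$, is exactly the point that needs checking, and it is right. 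Your route is shorter, recycles Lemma~\ref{lm:s0_sigv_ce1}, and yields uniqueness of $g_{\sigma_i}$ in $\gG$ for free; the paper's computation is self-contained and does not lean on contractivity of $T_\sigma$, which is relevant because Counterexample~2 reuses this lemma with $\beta > 1$, where your modulus-$\beta$ argument no longer applies as stated (though it is easily patched: the affine system determining $v_{\sigma_i}$, like the scalar equation $\log g(0) = -\gamma + \beta \log g(0)$, has a unique solution for any $\beta \neq 1$). The final substitution $g_\sigma(0) = \exp[-\gamma v_\sigma(1)]$ and $g_\sigma(1) = \exp[-\gamma v_\sigma(2)]$ reproduces every entry of the table, so for the lemma as stated (Counterexample~1, $\beta \in (0,1)$) there is no gap.
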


\begin{proof}
	Regarding $g_{\sigma_1}$, by definition,
	\begin{align*}
	g_{\sigma_1} (0) 
	&= \EE_{x'|0} \exp 
	\left( - \gamma
	\left\{ 
	x' - \sigma_1 (x') - 
	\frac{\beta}{\gamma} \log g_{\sigma_1} [\sigma_1(x')]
	\right\}
	\right)    \\
	&= \exp 
	\left( - \gamma
	\left[ 
	1 - 
	\frac{\beta}{\gamma} \log g_{\sigma_1} (0)
	\right]
	\right)  
	= \exp (-\gamma) g_{\sigma_1} (0)^{\beta}
	\end{align*}
	\begin{align*}
	\text{and} \quad
	g_{\sigma_1} (1)
	&=\EE_{x'|1} \exp 
	\left( - \gamma
	\left\{ 
	x' - \sigma_1 (x') - 
	\frac{\beta}{\gamma} \log g_{\sigma_1} [\sigma_1(x')]
	\right\}
	\right)    \\
	&= \exp 
	\left( - \gamma
	\left[ 
	2 - 
	\frac{\beta}{\gamma} \log g_{\sigma_1} (0)
	\right]
	\right)  
	= \exp(-2 \gamma) g_{\sigma_1} (0)^{\beta}.
	\end{align*}
	So $g_{\sigma_1}(0)$ and $g_{\sigma_1}(1)$ are as shown in the table. 
	The remaining proofs are similar.
\end{proof}

Based on Lemma \ref{lm:s1_sigv_ce1}, the refactored value function $g^*$ satisfies
\begin{align*}
&g^*(0) = \max_{\sigma_i} g_{\sigma_i} (0)
= g_{\sigma_3} (0) = \exp[ - \gamma \beta / (1 - \beta)]    \\
\text{and} \quad
&g^*(1) = \max_{\sigma_i} g_{\sigma_i}(0)
= g_{\sigma_2} (1) = g_{\sigma_3} (1) = \exp [ - \gamma / (1 - \beta)].
\end{align*}
Since $\hat{g}(0) = W_0 v^*(0) = \exp[-\gamma v^*(1)] = \exp[-\gamma / (1 - \beta)] \neq g^*(0)$, we see that $g^* \neq \hat{g}$. Moreover,
\begin{align*}
S g^*(0) 
&= \exp 
\left( - \gamma
\max
\left\{ 
1 - \frac{\beta}{\gamma} \log g^* (0),
- \frac{\beta}{\gamma} \log g^*(1)
\right\}    
\right)    \\ 
&= \exp [- \gamma (1 - \beta + \beta^2) / (1 - \beta^2) ]
\neq g^*(0).
\end{align*}
Hence, $g^* \neq S g^*$. The refactored  value function is not a fixed
point of the refactored  Bellman operator.

\subsubsection{Counterexample 2}

We now exhibit a dynamic program and plan factorization under which 
$S g^* = g^*$, $T v^* \neq v^*$ and $g^* \neq \hat{g}$.

The set up is same as Section \ref{sss:ce1}, except that we let $\beta > 1$. In this case, Lemmas 
\ref{lm:s0_sigv_ce1}--\ref{lm:s1_sigv_ce1} still hold and, as in 
Section~\ref{sss:ce1}, the value function and refactored value function satisfy
\begin{align*}
&v^*(1) = 1 / (1 - \beta), \quad 
v^*(2) = (2 - \beta) / (1 - \beta), \;   \\
&g^*(0) = \exp[- \gamma \beta / (1 - \beta)] \quad \text{and} \quad
g^*(1) = \exp[- \gamma / (1 - \beta)].
\end{align*}
We have seen in Section \ref{sss:ce1} that $g^* \neq \hat{g}$. Since in addition  
\begin{align*}
S g^*(0) 
&= \exp 
\left( - \gamma
\max
\left\{ 
1 - \frac{\beta}{\gamma} \log g^* (0),
- \frac{\beta}{\gamma} \log g^*(1)
\right\}    
\right)     \\
&= \exp [- \gamma \beta / (1 - \beta) ]
= g^*(0)    
\end{align*}
\begin{align*}
\text{and} \quad
S g^*(1) 
&= \exp 
\left( - \gamma
\max
\left\{ 
2 - \frac{\beta}{\gamma} \log g^* (0),
1 - \frac{\beta}{\gamma} \log g^*(1)
\right\}    
\right)  \\     
&= \exp [- \gamma / (1 - \beta) ]
= g^*(1),
\end{align*}
we have $S g^* = g^*$ as claimed. However, since
\begin{align*}
T v^*(1) &= \max 
\left\{ 
1 - \frac{\beta}{\gamma} \log \EE_{x'|0} \exp [- \gamma v^* (x')], \,
- \frac{\beta}{\gamma} \log \EE_{x'|1} 
\exp [- \gamma v^* (x')]
\right\}    \\
&= \max \{ 1 + \beta v^*(1), \, \beta v^*(2) \} 
= (2 \beta - \beta^2) / (1 - \beta) \neq v^*(1), 
\end{align*}
we have $T v^* \neq v^*$, so the value function is not a fixed point of the Bellman operator.

\bibliographystyle{ecta}

\bibliography{dpd}

\end{document}